\title{Tolerance Orders of Open and  Closed Intervals}
\author{Alan Shuchat \\  Department of Mathematics \\ Wellesley College \\
Wellesley, MA 02481 \ \ USA \\ \and
Randy Shull \\ Department of
Computer Science        \\ Wellesley College
\\ Wellesley, MA 02481 \ \ USA \\
\and Ann N.\ Trenk\thanks{This work was supported by a grant from the Simons Foundation (\#426725, Ann Trenk).} \\ Department of Mathematics \\ Wellesley College \\
Wellesley, MA 02481 \ \ USA
}
\date{January 12, 2017}
\newtheorem{theorem}{Theorem}
\newtheorem{defn}[theorem]{Definition}
\newtheorem{example}[theorem]{Example}
\newtheorem{remark}[theorem]{Remark}
\newtheorem{observation}[theorem]{Observation}
\newtheorem{lemma}[theorem]{Lemma}
\newtheorem{prop}[theorem]{Proposition}
\newcommand{\qed}{\mbox{$\Box$}}
\newcommand{\twotwo}{\mbox{${\bf 2}+{\bf 2}$}}
\newcommand{\fourone}{\mbox{${\bf 4}+{\bf 1}$}}
\newcommand{\threeoneone}{\mbox{${\bf 3}+{\bf 1}+{\bf 1}$}}
\newcommand{\threeone}{\mbox{${\bf 3}+{\bf 1}$}}
\begin{document}

  \maketitle

\bibliographystyle{plain}

\begin{center} {\sl ABSTRACT} \end{center}

\begin{quotation}
In this paper we combine ideas from tolerance orders with recent work on OC interval orders.  We consider representations of posets by unit intervals $I_v$ in which the interval endpoints ($L(v)$ and $R(v)$) may be open or closed as well as the center point ($c(v)$).    This yields four types of intervals:  $A$ (endpoints and center points closed), $B$ (endpoints and center points open), $C$ (endpoints closed, center points open), and $D$ (endpoints open, center points closed).  For any non-empty subset $S$ of $\{A,B,C,D\}$, we define an $S$-order as a poset $P$ that has a representation as follows:  each element $v$ of $P$ is assigned a unit interval $I_v$ of type belonging to $S$, and $x \prec y$ if and only if  either (i) $R(x) < c(y)$ or (ii) $R(x) = c(y)$ and at least one of $R(x), c(y)$ is open and at least one of $L(y), c(x)$ is open.    We characterize several of the classes of $S$-orders and provide separating examples between unequal classes.  In addition, for each $S \subseteq \{A,B,C,D\}$ we present a polynomial-time algorithm that recognizes $S$-orders, providing a representation when one exists and otherwise providing a certificate showing it is not an $S$-order.

\end{quotation}

 \bigskip\noindent \textbf{Keywords:  interval order,  unit interval order, semiorder, tolerance order, open and closed intervals }

\section{Introduction}
\label{intro-sec}

In this paper we combine ideas from tolerance orders with recent work on OC interval orders.  Both of these concepts start with interval orders and their representations.  A poset  $P=(X,\prec)$ is an \emph{interval order} if each $x \in X$ can be assigned a real interval $I_x$ so that $x \prec y$ if and only if all points of $I_x$  are less than all points of $I_y$.     If such a representation is possible with all intervals having the same length, $P$ is called a \emph{unit interval order}. Throughout this paper we denote the left endpoint of interval $I_x$ by $L(x)$, the right endpoint by $R(x)$ and the center point by $c(x)$.    Tolerance orders are a generalization in which some overlap is allowed between $I_x$ and $I_y$ when $x \prec y$.  More formally, a poset $P=(X,\prec)$ is a \emph{tolerance order} if each $x\in X$ can be assigned a real interval $I_x$ and two \emph{tolerant points\/} $t_\ell(x), t_r(x) \in I_x$ so that $x \prec y$ if and only if all points of $I_x$ are less than $t_\ell(y)$ and all points of $I_y$ are greater than  $t_r(x)$.  We consider the special case in which both tolerant points lie at the center of their interval, that is, $t_\ell(x) = t_r(x) = c(x)$ for all $x \in X$.    These orders are also known as 50\% tolerance orders, first defined for graphs in \cite{BoFiIsLa95}.  For additional background on tolerance orders and their graph analogues, see \cite{GoTr04}.  

Unit OC interval orders are a generalization of  unit interval orders in which each  unit  interval $I_x$ comes in one of two types:  an open interval $(L(x),R(x))$ or a closed interval $[L(x),R(x)]$.   These were first introduced in \cite{RaSz13} in graph form and subsequently studied by other authors, e.g., \cite{DoLePrRaSz12,Jo15,LeRa13,ShShTr16,ShShTrWe14}.   In this paper, we combine the concepts of 50\% tolerance orders and unit  interval orders by labeling the center points in one of two ways, called \emph{open} and \emph{closed}. This leads to four possible types of intervals, illustrated in Table~\ref{ABCD-table}: type $A$ ($I_x$ closed, $c(x)$ closed),  type $B$ ($I_x$ open, $c(x)$ open),  type $C$ ($I_x$ closed, $c(x)$ open),   and type $D$ ($I_x$ open, $c(x)$ closed). The open/closed terminology for points is suggested by the properties in Definition~\ref{ABCD-def}.   Note that unlike endpoints, even when $c(x)$ is called open it is an element of $I_x$.   We consider \emph{unit} OC interval orders because the class without the unit restriction is equivalent to the class of interval orders \cite{RaSz13,ShShTr16}.   Different classes of posets arise from limiting the types of unit  intervals allowed.  

\begin{table}
 \setlength{\unitlength}{0.75pt}  
  \begin{center}
\begin{tabular}{| c  c   c  c  |}
\hline
Type & Interval & Endpoints & Center \\  
\hline \hline 
 & & & \\
$A$& [--------$\bullet$--------]& closed & closed \\  
  \hline
 & & & \\
$B$& (--------$\circ$--------)&  open & open \\ \hline
& & & \\
$C$&[--------$\circ$--------] & closed & open \\ \hline
& & & \\
$D$& (--------$\bullet$--------)& open & closed \\
    \hline
\end{tabular}
 
\end{center}
\caption{The four types of intervals in an $ABCD$-representation.}
\label{ABCD-table}
\end{table}

\begin{defn}
{\rm Let $S$ be a non-empty subset  of $\{A,B,C,D\}$. An $S$\emph{-representation} of a poset $(X,\prec)$ is a collection ${\cal I}$ of unit intervals $I_x, x \in X$, of type belonging to $S$, where $x \prec y$ if and only if

(i) $R(x) < c(y)$ or

(ii) $R(x) = c(y)$,   at least one of $R(x),c(y)$ is open, and at  least one of $L(y),c(x)$ is open. 

An $S$\emph{-order} is a poset with an $S$-representation.  }
\label{ABCD-def}
\end{defn}
 We simplify the notation by eliminating set notation, for example, by referring to a $\{C,D\}$-representation as a $CD$-representation  and   a $\{C,D\}$-order as a $CD$-order.

 It is well-known that interval orders are   those  posets with no induced $\twotwo$ \cite{Fi70}.  However, the poset $\twotwo$ \emph{is} a $CD$-order and a representation is given in 
 Figure~\ref{CD-fig}, where $I_x, I_y$ are type $C$ and $I_z,I_w$ are  type $D$.  We show in Example~\ref{F-example}, that  up to permuting labels, this is the only way to represent $\twotwo$ using unit intervals of type $A,B,C,D$.

 \begin{figure}
 
\begin{center}

\begin{picture}(250,80)(0,0)
\thicklines

\put(5,20){\circle*{6}}
\put(5,65){\circle*{6}}
\put(30,65){\circle*{6}}
\put(30,20){\circle*{6}}
 
\put(5,20){\line(0,1){45}}
\put(30,20){\line(0,1){45}}
 
\put(-8,17){$x$}
\put(-8,62){$y$}
\put(37,17){$z$}
\put(37,62){$w$}

\put(100,80){\line(1,0){100}}
\put(100,55){\line(1,0){100}}
\put(150,30){\line(1,0){100}}
\put(150,5){\line(1,0){100}}

\put(87,52){$z$}
\put(87, 77){$x$}
\put(137,2){$w$}
\put(137,27){$y$}

\put(98,77){[} 
\put(98,52){(} 
\put(148,27){[} 
\put(148,2){(} 

\put(150,80){\circle{6}}
\put(150,55){\circle*{6}}
\put(200,30){\circle{6}}
\put(200,5){\circle*{6}}

\put(198,77){]} 
\put(198,52){)} 
\put(248,27){]} 
\put(248,2){)}

\end{picture}
\end{center}
 
\caption{The poset $\twotwo$  and a $CD$-representation of it.}
\label{CD-fig}
\end{figure}

We end this section with two poset definitions that will be important in later sections:  \emph{twin-free} and \emph{inseparable}.  Two points in a poset are said to be \emph{twins} if they have the same comparabilities, and a poset is \emph{twin-free} if it does not contain any twins.  Since twins can be given identical intervals, it suffices to consider twin-free posets when recognizing  classes of $S$-orders.    We say that poset $(X,\prec)$ is \emph{separable} if the ground set $X$ can be partitioned as $X = V \cup W$ so that $v \prec w$ whenever $v \in V$ and $w \in W$; otherwise it is called \emph{inseparable}.   Any poset can be partitioned into inseparable subposets and $S$-representations of these subposets can be joined to give an $S$-representation of the original poset (see \cite{Tr98} for details).  Thus our focus on  inseparable posets in Section~\ref{alg-sec} is not a substantive restriction.  
 
\section{The Case $|S|=1$ and Preliminaries}

 We begin this section with a 
 theorem  that shows that the posets that can be represented using a single type of interval from the set $S$ are precisely the unit interval orders.

\begin{theorem}
For any subset $S$ of $\{A,B,C,D\}$ with $|S|=1$, a poset is an $S$-order if and only if it is a unit interval order.  
\label{singleton-thm}
\end{theorem}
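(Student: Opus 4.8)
The plan is to reduce Definition~\ref{ABCD-def} to a single threshold inequality in each of the four singleton cases, and then match that inequality against the unit interval order condition. Since all intervals have a common length, I will track each interval by its left endpoint together with the shared length.

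First I would unpack clause (ii) for each single type, reading the open/closed data off Table~\ref{ABCD-table}. When $S=\{A\}$ both $R(x)$ and $c(y)$ are closed, so the requirement that at least one of them be open can never be met; clause (ii) is vacuous and $x \prec y$ holds if and only if $R(x) < c(y)$. When $S$ is $\{B\}$, $\{C\}$, or $\{D\}$, one checks type by type that at least one of $R(x),c(y)$ is open and at least one of $L(y),c(x)$ is open, so clause (ii) fires exactly when $R(x)=c(y)$; hence $x \prec y$ if and only if $R(x) \le c(y)$. Next, writing $a_x=L(x)$ and letting $\ell$ be the common length, we have $R(x)=a_x+\ell$ and $c(y)=a_y+\ell/2$. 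Substituting, the type-$A$ condition becomes $x \prec y \Leftrightarrow a_y-a_x > \ell/2$, while the type-$B,C,D$ condition becomes $x \prec y \Leftrightarrow a_y-a_x \ge \ell/2$. Reading the paper's definition of a unit interval order off left endpoints gives the comparison target: $P$ is a unit interval order if and only if its elements admit real values $b_x$ with $x \prec y \Leftrightarrow b_y-b_x > 1$. Everything thus reduces to relating strict and non-strict threshold representations, which are invariant under positive rescaling of the coordinates.

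For $S=\{A\}$ the equivalence is then immediate: a type-$A$ representation is exactly a strict threshold representation with threshold $\ell/2$, and rescaling by $2/\ell$ converts it to threshold $1$, i.e.\ a unit interval order; conversely a unit interval order, suitably rescaled, is a type-$A$ representation. The only genuine work is for $S\in\{\{B\},\{C\},\{D\}\}$, where I must pass between the non-strict condition $a_y-a_x \ge \ell/2$ and a strict one. Here I would exploit that $X$ is finite, so the set of realized differences $\{a_y-a_x : x,y\in X\}$ is finite (and contains $0$). Given a non-strict representation, choose a threshold $\theta'$ strictly between $\ell/2$ and the largest realized difference that is still below $\ell/2$; then $a_y-a_x \ge \ell/2 \Leftrightarrow a_y-a_x > \theta'$, yielding a strict representation and hence, after rescaling, a unit interval order. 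Conversely, given $x \prec y \Leftrightarrow b_y-b_x > 1$ (the case of an antichain being trivial), set $\mu=\min\{b_y-b_x-1 : x \prec y\}>0$ and use threshold $1+\mu/2$ with the relation $\ge$: every comparable pair clears it and every other pair falls short, producing a type-$B,C,D$ representation.

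I expect the strict-versus-non-strict boundary—precisely the $R(x)=c(y)$ case that separates type $A$ from the other three—to be the one delicate point, and the finiteness/gap argument above is what resolves it; the remaining steps are routine substitutions and rescalings.
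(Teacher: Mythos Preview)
Your proposal is correct and takes a genuinely different route from the paper's proof. The paper argues the forward direction structurally: it checks (via forcing cycles, deferred to Example~\ref{F-example}) that $\twotwo$ and $\threeone$ are not $S$-orders when $|S|=1$, and then invokes the Scott--Suppes characterization of unit interval orders. For the converse, it starts from a unit interval representation with all endpoints distinct and stretches each interval so that no center coincides with any endpoint, making the type assignment irrelevant. By contrast, you bypass both Scott--Suppes and the distinct-endpoint lemma entirely: you reduce each singleton case to a threshold condition on left endpoints (strict for type~$A$, non-strict for $B,C,D$) and then use finiteness of the realized difference set to slip between strict and non-strict thresholds. Your argument is more elementary and fully self-contained, while the paper's proof ties the result into the forcing-cycle machinery that drives the rest of the paper. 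Both are valid; yours is the cleaner standalone proof, and the paper's has the advantage of foreshadowing later techniques.
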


\begin{proof}
One can check that the orders $\twotwo$ and $\threeone$ are not $S$-orders when $|S|=1$, and we provide a short proof of this using forcing cycles in Example~\ref{F-example}.  Using the Scott-Suppes Theorem \cite{ScSu58}, we conclude that all $S$-orders with $|S|=1$ are unit interval orders.

Conversely, let $S$ be a singleton subset of $\{A,B,C,D\}$ and let $P=(X,\prec)$ be a unit interval order. We will prove that $P$ has an $S$-representation. Fix a unit interval representation of $P$ in which all intervals have length $\lambda$ and all endpoints are distinct (see Lemma 1.5 of \cite{GoTr04} for a proof that this is possible). Let $[\ell_x, r_x]$ be the interval assigned to $x \in X$ and let $\epsilon$ be the smallest distance between endpoints in the representation.  For each $x \in X$, define $L(x) = \ell_x - \lambda + \epsilon$, $R(x) = r_x$, and $I_x = [L(x),R(x)]$.   One can check that this gives an $A$-representation of $P$.  Indeed, no center point of this representation is equal to any endpoint, therefore these endpoints also yield $S$-representations of $P$ when 
 $S= \{B\}$,  $S= \{C\}$,  and  $S= \{D\}$.  \qed
\end{proof}

The proof that unit interval orders are $A$-orders also  follows from Theorem~10.4 in \cite{GoTr04}.

\begin{defn} {\rm  Fix an $S$-representation of a poset $P$.  A \emph{CD-swap} occurs when each interval of type $C$ in the representation is transformed into a type $D$ interval with the same center  and each type $D$ interval is similarly transformed into a type $C$ interval.  }
\end{defn}

\begin{lemma}
Let $\cal I$ be an $S$-representation of a poset $P$ and let ${\cal I}'$ be the set of intervals obtained by applying a $CD$-swap to $\cal I$. Then ${\cal I}'$  is also an $S$-representation of   $P$.  
\label{CD-swap}
\end{lemma}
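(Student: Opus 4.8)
The plan is to exploit the fact that a $CD$-swap alters only the open/closed labels of points, never their numerical positions. I would begin by recording this explicitly: the intervals all share a common length, and a $CD$-swap fixes the center of each interval, so it fixes both endpoints $L(x)$ and $R(x)$ as real numbers. Consequently, for every pair $x,y$ the numerical relations $R(x)<c(y)$ and $R(x)=c(y)$ have the same truth values in $\mathcal{I}$ and $\mathcal{I}'$. In particular condition (i) of Definition~\ref{ABCD-def} is unaffected, and which of case (i), case (ii), or neither applies is decided identically in both representations. It then remains only to check that, for pairs with $R(x)=c(y)$, the open/closed part of condition (ii) has the same truth value before and after the swap.

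Next I would isolate the label data that condition (ii) actually uses. In each of the four types the two endpoints $L(x)$ and $R(x)$ carry the same open/closed status, so I would write $e(x)$ for this common endpoint status and $c(x)$ for the center status, each read as the Boolean ``is open.'' With this notation the label part of condition (ii) becomes $(e(x)\lor c(y))\land(e(y)\lor c(x))$: the clause ``at least one of $R(x),c(y)$ is open'' is $e(x)\lor c(y)$, and ``at least one of $L(y),c(x)$ is open'' is $e(y)\lor c(x)$.

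The crux is to observe that a $CD$-swap is precisely the operation interchanging the endpoint status and the center status of every interval. Indeed, in types $A$ and $B$ the endpoints and center already agree and are fixed, while types $C$ and $D$ (whose endpoints and center disagree) are exchanged; in all four cases the net effect on the label pair is $e\mapsto c$ and $c\mapsto e$. Applying this substitution to $(e(x)\lor c(y))\land(e(y)\lor c(x))$ produces $(c(x)\lor e(y))\land(c(y)\lor e(x))$, which is the same Boolean expression with its two conjuncts interchanged. Hence the label part of condition (ii) is invariant under the swap, and combined with the first paragraph this shows $\mathcal{I}'$ represents exactly the same relation $\prec$, so it is an $S$-representation of $P$.

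I expect the main obstacle to be the temptation to argue clause by clause. The two clauses $e(x)\lor c(y)$ and $e(y)\lor c(x)$ are \emph{not} individually preserved: for a pair with $x$ of type $A$ and $y$ of type $C$, the first clause flips from true to false under the swap, and only the compensating change in the second clause keeps the conjunction constant. A proof that treats the clauses separately therefore cannot work, and one must keep the conjunction intact and use its symmetry under $e\leftrightarrow c$. As a fallback, invariance can instead be verified by a direct sixteen-case table indexed by the ordered type pair of $x$ and $y$, but the symmetry argument is cleaner and explains why the naive approach fails.
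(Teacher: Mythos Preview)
Your proof is correct and takes a cleaner route than the paper's. The paper argues by direct case analysis on the types of $I_x$ and $I_y$ in the boundary situation $R(x)=c(y)$: it checks separately the cases where one interval is type $B$, where one is type $A$ and the other not type $B$, where both are type $C$ (or both type $D$), and where one is type $C$ and the other type $D$, verifying in each that the comparability is unchanged by the swap. You instead identify the label portion of condition (ii) as the Boolean expression $(e(x)\lor o(y))\land(e(y)\lor o(x))$, observe that a $CD$-swap is precisely the involution $e\leftrightarrow o$ on every interval, and note that the expression is invariant under this involution because it merely permutes the two conjuncts. This compresses the paper's case table into a single symmetry, and your remark that the individual clauses are \emph{not} preserved nicely explains why the case analysis cannot be shortcut. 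One notational quibble: you overload $c(x)$ to mean both the numerical center (as in the paper) and the Boolean ``center is open''; renaming the latter, say to $o(x)$, would avoid the clash without changing anything substantive.
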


\begin{proof}
${\cal I}'$ is an $S$-representation of a poset $P'$ with the same ground set $X$ as $P$.  We wish to show $P' = P$ by showing that, for all $x,y \in X$, we have $x \prec y$ in $P$ if and only if $x \prec y$ in $P'$.    Consider any two points $x,y$ in $P$ and without loss of generality assume $c(x) \le c(y)$.    If $R(x) < c(y)$ then $x \prec y$ in both $P$ and $P'$, and if $R(x) > c(y)$ then $x \parallel y$ in both $P$ and $P'$.  Hence it suffices to consider the case in which $R(x) = c(y)$.  If either of $I_x, I_y$ is of type $B$, then $x \prec y$  in both $P$ and $P'$.  If one is of type $A$ and the other is not of type $B$,  then $x \parallel y$ in both $P$ and $P'$.   If both $I_x$ and $I_y$ are type $C$ (or both type $D$) then $x \prec y$  in both $P$ and $P'$, and finally, if one of $I_x,I_y$ is of type $C$ and the other is type $D$, then $x \parallel y$ in both $P$ and $P'$.  Thus $P=P'$ and ${\cal I}'$  is   an $S$-representation of   $P$.  $\qed$
\end{proof}

We end this section with an observation that will be useful as we analyze $S$-representations.  Note that the interval  type is only relevant in the third case. In  Observation~\ref{obs-centers}, and throughout the rest of the paper, we will scale representations so that all intervals have length 2.

\begin{observation}  Let $P = (X, \prec)$  be an $S$-order, let ${\cal I} = \{I_x:x \in X\}$ be an $S$-representation of $P$ in which all intervals have length 2, and let   $x,y \in X$.  \begin{itemize}
\item  If $|c(x) -c(y)| < 1$ then $x \parallel y$ in $P$.
\item  If $|c(x) -c(y)| > 1$ and $c(x) < c(y)$, then $x \prec y $ in $P$.
\item  If $|c(x) -c(y)| = 1$  and $c(x) \le c(y)$,
then $x \prec y $ in $P$ precisely when at least one of $I_x,I_y$ is type $B$, or both are type $C$, or both are type $D$.

\label{obs-centers}
\end{itemize}

\end{observation}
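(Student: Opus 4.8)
The plan is to exploit the length-$2$ normalization to convert all three statements into elementary comparisons of center points, and then to read off comparability directly from Definition~\ref{ABCD-def}. Since every interval has length $2$, we have $R(x) = c(x)+1$ and $L(x) = c(x)-1$ for every $x$, so the position of $R(x)$ relative to $c(y)$ is governed entirely by the quantity $c(y)-c(x)$. I would record at the outset which of the four relevant points can be open in each interval type: an endpoint is open exactly when the interval is of type $B$ or $D$, and a center is open exactly when the interval is of type $B$ or $C$. These two rules are all that is needed to test condition (ii).

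For the first bullet, assume without loss of generality that $c(x) \le c(y)$, so $c(y)-c(x) < 1$. Then $R(x) = c(x)+1 > c(y)$ and $R(y) = c(y)+1 > c(x)$, so none of $R(x) < c(y)$, $R(x)=c(y)$, $R(y) < c(x)$, $R(y)=c(x)$ can hold; hence neither $x \prec y$ nor $y \prec x$, giving $x \parallel y$. For the second bullet, $c(y)-c(x) > 1$ gives $R(x) = c(x)+1 < c(y)$, so part (i) of Definition~\ref{ABCD-def} applies and $x \prec y$. Both of these are immediate once the length-$2$ substitution is made.

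The substantive case is the third bullet, where $c(y)-c(x) = 1$ forces $R(x) = c(y)$. Now part (i) cannot hold, so $x \prec y$ if and only if part (ii) holds, i.e. if and only if (a) at least one of $R(x), c(y)$ is open and (b) at least one of $L(y), c(x)$ is open. Using the open/closed rules above, condition (a) becomes ``$I_x \in \{B,D\}$ or $I_y \in \{B,C\}$'' and condition (b) becomes ``$I_y \in \{B,D\}$ or $I_x \in \{B,C\}$''. I would then verify, by running through the $4\times 4$ grid of type pairs $(I_x, I_y)$, that (a) and (b) hold simultaneously exactly when some interval is type $B$, or both are type $C$, or both are type $D$, which is the claimed condition.

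The only real work is this final enumeration, and it is pure bookkeeping rather than a genuine obstacle. I expect to shorten it using the $CD$-swap: Lemma~\ref{CD-swap} guarantees that interchanging the roles of $C$ and $D$ (fixing $A$ and $B$) preserves all comparabilities, and the target condition ``some $B$, or both $C$, or both $D$'' is itself invariant under this interchange, so each verified pair automatically settles its swapped partner and the casework is effectively halved. If $I_x$ or $I_y$ is type $A$, one checks directly that comparability forces the other to be type $B$; the remaining cases among $\{B,C,D\}$ then fall out quickly from conditions (a) and (b).
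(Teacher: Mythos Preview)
Your argument is correct. The paper states this result as an \emph{Observation} and gives no proof at all, so there is nothing to compare against; your verification via the length-$2$ substitution $R(x)=c(x)+1$, $L(x)=c(x)-1$ and the case check on condition~(ii) is exactly the intended (and essentially only) way to justify it.
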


\section{Forcing Cycles and Separating Examples} 
\label{forcing-sec}
In this section we use the concept of a forcing cycle,  defined in \cite{GiTr98}, to yield information about $S$-representations.  Forcing cycles are closely related to the \emph{picycles} studied by Fishburn \cite{Fi85}.  We begin with the notion of a \emph{forcing trail} and notation to keep track of the number of comparabilities and incomparabilities encountered along the trail.

\begin{defn} {\rm
A \emph{forcing trail} ${\cal T}$ in poset $(X,\prec)$ is a sequence $x_0,x_1,x_2, \ldots, x_t$ of elements of $X$ so that for each $i: 0 \le i \le t-1$, either $x_i \prec x_{i+1}$ or $x_i \parallel x_{i+1}$.   We also define

$up_{\cal T}(x_i) = | \{j:0 \le j \le i-1, x_j \prec x_{j+1} \} |$

$side_{\cal T}(x_i) = |\{j:0 \le j \le i-1, x_j \parallel x_{j+1} \} |$

$val_{\cal T}(x_i) =  up_{\cal T}(x_i)  - side_{\cal T}(x_i)  $.

A \emph{forcing cycle} is a forcing trail $x_0,x_1,x_2, \ldots, x_t$ for which $x_0 = x_t$, and we define $up({\cal C}) = up_{\cal C}(x_t)$, $side({\cal C}) = side_{\cal C}(x_t)$, and $val({\cal C}) = val_{\cal C}(x_t)$.
}
\label{forcing_trail_def}
\end{defn}

While the first and last elements of a forcing cycle \emph{must} be equal,  there may be other elements in a forcing trail or cycle that are also equal.
For convenience, we sometimes write forcing trails with the comparabilities  and incomparabilities included.  For example,  in the poset $\twotwo$ in Figure~\ref{CD-fig},
we may write the forcing trail ${\cal T}:  x,y,z,w$    as 
${\cal T}:  x \prec y \parallel z \prec w$.    The next lemma shows how forcing trails give lower bounds on centers of elements in $S$-representations.

\begin{lemma}
Let $P$ be an $S$-order and ${\cal T}: x_0, x_1, x_2, \ldots, x_t$ a forcing trail in $P$.  Fix an $S$-representation of $P$ in which all intervals have length 2.   For $0 \le i \le t$,

(i)  $c(x_i) \ge c(x_0) + val_{\cal T}(x_i)$   and 

(ii)  if $c(x_i) > c(x_0) + val_{\cal T}(x_i)$ for some $i$, then $c(x_j) >  c(x_0) + val_{\cal T}(x_j)$ for all $j \ge i$.
\label{val-lem}
\end{lemma}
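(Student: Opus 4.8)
The plan is to reduce both parts to a single per-step inequality comparing how $c$ and $val_{\mathcal{T}}$ change as we pass from $x_k$ to $x_{k+1}$, and then telescope. Concretely, I would first establish that for each $k$ with $0 \le k \le t-1$,
\[
c(x_{k+1}) - c(x_k) \ge val_{\mathcal{T}}(x_{k+1}) - val_{\mathcal{T}}(x_k),
\]
where the right-hand side equals $+1$ when $x_k \prec x_{k+1}$ and $-1$ when $x_k \parallel x_{k+1}$, directly from the definitions of $up_{\mathcal{T}}$ and $side_{\mathcal{T}}$.

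To prove this per-step bound I would split into the two cases and invoke Observation~\ref{obs-centers} (applied to the length-$2$ representation). In the incomparable case $x_k \parallel x_{k+1}$, the second bullet of the Observation forbids $|c(x_k) - c(x_{k+1})| > 1$, so $|c(x_k) - c(x_{k+1})| \le 1$ and in particular $c(x_{k+1}) \ge c(x_k) - 1$, matching the decrease of $1$ in $val_{\mathcal{T}}$. In the comparable case $x_k \prec x_{k+1}$, the first bullet forbids $|c(x_k) - c(x_{k+1})| < 1$, so the centers differ by at least $1$; it then remains only to rule out $c(x_{k+1}) < c(x_k)$. If that inequality held, we would have $c(x_{k+1}) \le c(x_k)$ with gap at least $1$, and the second and third bullets of the Observation would force either $x_{k+1} \prec x_k$ or $x_{k+1} \parallel x_k$, each contradicting $x_k \prec x_{k+1}$ (the former by antisymmetry of $\prec$, the latter by the definition of incomparability). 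Hence $c(x_{k+1}) \ge c(x_k) + 1$, matching the increase of $1$ in $val_{\mathcal{T}}$.

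With the per-step inequality in hand, part (i) follows by summing it from $k=0$ to $k=i-1$: the telescoping left side is $c(x_i) - c(x_0)$ and the telescoping right side is $val_{\mathcal{T}}(x_i) - val_{\mathcal{T}}(x_0) = val_{\mathcal{T}}(x_i)$, since the empty index sets give $up_{\mathcal{T}}(x_0) = side_{\mathcal{T}}(x_0) = 0$ and hence $val_{\mathcal{T}}(x_0) = 0$. For part (ii), I would apply the same telescoped inequality to the sub-trail running from index $i$ to index $j \ge i$, obtaining
\[
c(x_j) - c(x_i) \ge val_{\mathcal{T}}(x_j) - val_{\mathcal{T}}(x_i),
\]
and then add the strict hypothesis $c(x_i) - c(x_0) > val_{\mathcal{T}}(x_i)$ to conclude $c(x_j) - c(x_0) > val_{\mathcal{T}}(x_j)$, so strictness propagates forward as claimed.

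The main obstacle is the single subtle point flagged above, namely ruling out a decrease in center across a comparable step, where one must combine the ``gap at least $1$'' consequence of the Observation with antisymmetry of $\prec$; everything else is routine telescoping bookkeeping. I expect no trouble with degenerate configurations such as $c(x_k) = c(x_{k+1})$, since equal centers force incomparability by the first bullet and are therefore already covered by the $\parallel$ case.
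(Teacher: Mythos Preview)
Your proposal is correct and is essentially the same argument as the paper's: both establish the per-step inequality $c(x_{k+1})-c(x_k)\ge val_{\mathcal T}(x_{k+1})-val_{\mathcal T}(x_k)$ and then propagate it along the trail (the paper phrases this as induction on $i$, you phrase it as telescoping, which is the same thing). The only cosmetic difference is that the paper reads the comparable-step bound $c(x_{i+1})\ge c(x_i)+1$ directly from Definition~\ref{ABCD-def} (since $x_i\prec x_{i+1}$ forces $R(x_i)\le c(x_{i+1})$, i.e.\ $c(x_i)+1\le c(x_{i+1})$), whereas you route it through Observation~\ref{obs-centers}; your detour is valid but slightly longer than necessary.
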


\begin{proof}
We proceed by induction on $i$.    The base case $i=0$ is true  for (i)  by Definition~\ref{forcing_trail_def},
and true for (ii) vacuously.  We assume (i) and (ii) are true for $i$ and show they hold for $i+1$.  

If $x_i \prec x_{i+1}$ then  by Definition~\ref{forcing_trail_def},
$val_{\cal T}(x_{i+1}) = val_{\cal T}(x_i) + 1 $, and by Definition~\ref{ABCD-def},  $c(x_{i+1}) \ge c(x_i) + 1$.
Hence $c(x_{i+1}) \ge  c(x_0) + val_{\cal T}(x_i) + 1 = c(x_0) + val_{\cal T}(x_{i+1})$.
In addition, if $c(x_i) >  c(x_0) + val_{\cal T}(x_i)$  we obtain $c(x_{i+1}) > c(x_0) + val_{\cal T}(x_{i+1})$.

If $x_i \parallel x_{i+1}$, a similar argument shows that (i), (ii) are true for $i+1$.
\qed
\end{proof}

\smallskip

For some posets $P$, the values of the forcing cycles in $P$ completely determine whether or not $P$ is an $S$-order, and this is independent of the choice of $S$.  For others,  the choice of $S$ is crucial. For example we see from Figure~\ref{CD-fig} that $\twotwo$ is a $CD$-order, yet  we know from Theorem~\ref{singleton-thm}
that $\twotwo$ is not an $S$-order whenever $|S| = 1$.  The next theorem makes this precise.
\begin{theorem}

For a poset $P$, exactly one of the following holds.  

(i)   $P$ has a 
  forcing cycle ${\cal C}:  x_0, x_1, x_2, \ldots, x_t$ with 
 $val({\cal C}) > 0$, in which case  $P$ is not an $S$-order for any $S$.
 
 (ii)  All forcing cycles ${\cal C}$ in $P$ have $val({\cal C}) < 0$, in which  case $P$ is a unit interval order and an $S$-order for all  non-empty $S$.
 
 (iii)  No forcing cycle in  $P$  has a positive value and there exists a   forcing cycle with value 0.   For any value 0 forcing cycle ${\cal C}:  x_0, x_1, x_2, \ldots, x_t$,  any $S$-representation of $P$ in which all intervals have length 2 must have $c(x_i) = c(x_0) + val_{\cal C}(x_i)$, for all $i: 0 \le i \le t$.
\label{wd-thm}
\end{theorem}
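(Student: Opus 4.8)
The plan is to first establish that the three conditions are mutually exclusive and jointly exhaustive, and then to verify the stated consequence inside each case. For the trichotomy I would classify $P$ according to whether it has a forcing cycle of positive value. If it does, condition (i) holds, and neither (ii) nor (iii) can, since each of those forbids a positive cycle. If $P$ has no positive forcing cycle, then every forcing cycle has value $\le 0$, and I split on whether some forcing cycle has value exactly $0$: if so, condition (iii) holds while (ii) fails (it requires all values strictly negative); if not, every forcing cycle has value $< 0$, so (ii) holds and (iii) fails. A poset with no forcing cycle at all falls under (ii) vacuously. Hence exactly one condition holds.

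Next I would dispatch (i). Suppose $P$ has a forcing cycle ${\cal C}: x_0,\ldots,x_t$ with $val({\cal C}) > 0$, and suppose toward a contradiction that $P$ is an $S$-order for some $S$. Scaling an $S$-representation so that all intervals have length $2$ (which preserves the defining conditions of Definition~\ref{ABCD-def}, as they involve only order comparisons of points and their open/closed labels), Lemma~\ref{val-lem}(i) applied to ${\cal C}$ gives $c(x_t) \ge c(x_0) + val({\cal C})$. Since $x_t = x_0$, this forces $val({\cal C}) \le 0$, a contradiction; so $P$ is not an $S$-order for any $S$. Part (iii) follows just as quickly: given a value-$0$ forcing cycle ${\cal C}: x_0,\ldots,x_t$ and a length-$2$ representation, Lemma~\ref{val-lem}(i) gives $c(x_i) \ge c(x_0) + val_{\cal C}(x_i)$ for every $i$, and if this were strict for some $i$ then Lemma~\ref{val-lem}(ii) would propagate strictness to $j = t$, yielding $c(x_t) > c(x_0) + val({\cal C}) = c(x_0)$; but $x_t = x_0$ makes this $c(x_0) > c(x_0)$, impossible. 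Thus equality holds for all $i$.

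The substantive step is (ii): showing that strictly negative forcing cycle values force $P$ to be a unit interval order. I would argue the contrapositive using the Scott-Suppes characterization \cite{ScSu58}, by which a poset fails to be a unit interval order exactly when it contains an induced $\twotwo$ or $\threeone$. Each of these contains a forcing cycle of value $0$: writing the two chains of $\twotwo$ as $a \prec b$ and $c \prec d$, the cycle $a \prec b \parallel c \prec d \parallel a$ has two up-steps and two side-steps, hence value $0$; writing the chain of $\threeone$ as $a \prec b \prec c$ with isolated element $e$, the cycle $a \prec b \prec c \parallel e \parallel a$ likewise has value $0$. Since an induced subposet inherits all comparabilities and incomparabilities, such a cycle is also a value-$0$ forcing cycle of $P$. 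Therefore, if every forcing cycle of $P$ has negative value, $P$ contains no induced $\twotwo$ or $\threeone$ and so is a unit interval order. Finally, Theorem~\ref{singleton-thm} shows every unit interval order is a $\{t\}$-order for each single type $t$; choosing any $t \in S$, such a representation uses only intervals of a type in $S$ and is thus an $S$-representation, so $P$ is an $S$-order for every non-empty $S$.

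I expect the only real obstacle to be locating the value-$0$ cycles inside the two forbidden posets and confirming that they are genuine forcing cycles of $P$ rather than merely of the subposet; once those cycles are exhibited and the inheritance of relations is noted, the Scott-Suppes reduction renders the remainder of (ii) routine, and parts (i) and (iii) are immediate consequences of Lemma~\ref{val-lem} together with the cycle condition $x_t = x_0$.
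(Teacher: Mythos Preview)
Your proof is correct. Parts (i) and (iii) match the paper's argument essentially verbatim: both derive a contradiction from Lemma~\ref{val-lem} together with $x_t = x_0$.

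For part (ii), however, you take a genuinely different and more elementary route. The paper invokes external machinery: it observes that $val({\cal C}) < 0$ for all $\cal C$ means $\max_{\cal C} \tfrac{up({\cal C})}{side({\cal C})} < 1$, then cites a theorem from \cite{ShShTr07} to conclude the fractional weak discrepancy of $P$ is less than $1$, and a proposition from \cite{ShShTr06} to deduce $P$ is a semiorder. Your argument bypasses this entirely by exhibiting explicit value-$0$ forcing cycles inside $\twotwo$ and $\threeone$ and invoking Scott--Suppes directly via the contrapositive. Your approach is self-contained (it does not require the reader to consult two additional papers or to know what fractional weak discrepancy is), and it makes transparent exactly why the forbidden subposets are the obstruction. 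The paper's approach, by contrast, situates the result within a broader framework and would generalize more readily to statements about other threshold values of $val({\cal C})$, but for the statement at hand your argument is cleaner.
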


\begin{proof}
To prove (i),   let  ${\cal C}:  x_0, x_1, x_2, \ldots, x_t$ be a forcing cycle in $P$ with $val({\cal C}) >0$.  Thus $val_{\cal C}(x_t) > 0$.  
Suppose for a contradiction that $P$ is an $S$-order for some $S$.
    Fix an $S$-representation of $P$ in which all interval lengths are 2.  Then  $c(x_t) > c(x_0)$ by part (i) of Lemma~\ref{val-lem}, but this is a contradiction since $x_t = x_0$.

    Next we prove (ii) relying on prior results.  By hypothesis we know $up({\cal C}) < side({\cal C})$ for each forcing cycle ${\cal C}$ in $P$, thus $\max_{\cal C} \frac{up({\cal C})}{side({\cal C})} < 1$,  where the maximum is taken over all forcing cycles in $P$.  By Theorem~13 in \cite{ShShTr07}, the  fractional weak discrepancy of $P$ is less than 1, and by Proposition~10 in \cite{ShShTr06}, $P$ is a semiorder, another term for a unit interval order.    It then follows from Theorem~\ref{singleton-thm}    that $P$ is an $S$-order for all $S$ with $|S|$ = 1 and thus for all non-empty $S$.

    Finally, we prove (iii).  Let  ${\cal C}:  x_0, x_1, x_2, \ldots, x_t$ be a forcing cycle in $P$ with $val({\cal C}) =0$.   Suppose for a contradiction that $P$ has an $S$-representation in which  all intervals have length 2, but for which there exists $i$ so that $c(x_i) >  c(x_0) + val_{\cal C}(x_i)$.  By (ii) of Lemma~\ref{val-lem},  we have $c(x_t) > c(x_0) + val_{\cal C}(x_t)$, a contradiction since $c(x_t) =c(x_0)$.
     \qed
\end{proof}

\smallskip
For any $S$, Theorem~\ref{wd-thm} completely determines whether  a poset $P$ satisfying (i) or (ii) is an $S$-order.  
Thus, for the rest of this paper we focus on the remaining posets, that is, those that satisfy (iii).  There are posets, such as $V$ of Example~\ref{F-example}, that satisfy (iii) yet are not $S$-orders for any $S$. 
Theorem~\ref{sewing} shows that the key to determining if   a poset $P$ satisfying (iii)  is an $S$-order is being able to find an $S$-representation for each value 0  forcing cycle  in $P$.   If $S$-representations for these forcing cycles exist, they can be interlaced together.    We use Theorem~\ref{sewing} in the proof of Proposition~\ref{venn-prop} and defer the proof of the theorem until Section~\ref{alg-sec},
where we present an algorithm to achieve the interlacing.

\begin{theorem}
\label{sewing}
Let $S$ be a non-empty subset of $\{A,B,C,D\}$.  Suppose $P$ is a poset for which $val({\cal C}) \le 0$ for every forcing cycle ${\cal C}$ in $P$.  Furthermore, suppose the points of every forcing cycle with value 0 induce in $P$ an $S$-order.  Then $P$ is an $S$-order.
\end{theorem}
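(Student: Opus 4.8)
The plan is to dispose of the easy case first and then reduce the whole problem to a question about assigning interval \emph{types}, the genuinely hard part being the interlacing that is deferred to Section~\ref{alg-sec}. By Theorem~\ref{wd-thm}, the hypothesis $val(\mathcal{C}) \le 0$ for every forcing cycle $\mathcal{C}$ places $P$ in case (ii) or case (iii). If $P$ is in case (ii), then $P$ is a unit interval order and hence an $S$-order for every non-empty $S$, and we are done. So I would assume from here on that $P$ is in case (iii), meaning $P$ has at least one value $0$ forcing cycle and none of positive value.

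The next step is to fix the centers. I would read the requirements of Observation~\ref{obs-centers} as a system of difference constraints on the $c(x)$: namely $c(y)-c(x)\ge 1$ for each $x\prec y$, and $-1 \le c(y)-c(x) \le 1$ for each incomparable pair $x \parallel y$. Encoding these in a weighted digraph on $X$ --- a weight $+1$ arc $x\to y$ for each relation $x\prec y$, and a pair of weight $-1$ arcs $x\to y$, $y\to x$ for each incomparable pair --- the directed cycles of this digraph are exactly the forcing cycles of $P$, each with total arc weight equal to its value $val(\mathcal{C})$. Since no forcing cycle has positive value, the system is feasible and a finite solution $c$ exists by the standard longest-path (Bellman--Ford) argument. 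Moreover I would take $c$ to be \emph{generic}, that is, a relative interior point of the feasible polyhedron, so that every difference $c(y)-c(x)$ that is not forced to a fixed value by the constraints is strictly larger than its lower bound. For such a $c$, Observation~\ref{obs-centers} guarantees that every pair with $|c(x)-c(y)|\neq 1$ is represented correctly \emph{regardless of interval type}: distant pairs yield comparabilities and close pairs yield incomparabilities, exactly matching $P$.

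Thus the entire problem collapses onto the pairs at center-distance exactly $1$. The key fact I would establish is that, for a generic $c$, any such pair is \emph{forced} to distance $1$: if it were not forced, genericity would push the difference strictly past $1$. A forced tight constraint between $x$ and $y$ always closes up with an opposing forcing trail into a forcing cycle of value $0$ through both $x$ and $y$ (here Lemma~\ref{val-lem} and the value accounting of Definition~\ref{forcing_trail_def} are used to verify both the existence of the opposing trail and that the resulting cycle has value $0$). Consequently every tight pair lies on some value $0$ forcing cycle. By hypothesis the points of each such cycle induce an $S$-order, and by Theorem~\ref{wd-thm}(iii) any length-$2$ $S$-representation of that induced order places its centers exactly at the values dictated by $val_{\mathcal{C}}$, in agreement (up to translation) with the global $c$ already fixed. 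Each such local representation therefore supplies interval types that correctly realize all the tight comparabilities and incomparabilities inside that cycle.

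The main obstacle, and the reason the completed argument is deferred to Section~\ref{alg-sec}, is \emph{global consistency} of these type assignments. A single element may lie on several distinct value $0$ forcing cycles, and the local representations inherited from different cycles may demand incompatible types for it even though each is internally valid; one must show that a single choice of type per element can simultaneously realize every tight relation in which it participates. This is precisely the interlacing (or ``sewing'') step: the algorithm of Section~\ref{alg-sec} processes the value $0$ forcing cycles and merges their representations into one, and the freedom afforded by Lemma~\ref{CD-swap} may be used to align the $C$/$D$ choices across overlapping cycles. Once a consistent global typing is produced, combining it with the centers $c$ yields an $S$-representation of $P$, completing the proof.
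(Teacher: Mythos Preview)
Your overall plan matches the paper's: both proofs ultimately say ``run the machinery of Section~\ref{alg-sec} and observe it cannot fail.'' The paper's proof is literally three sentences: apply the algorithm; Proposition~\ref{trace-thm} says the \emph{Labeling Loop} cannot abort (no positive-value cycle); the hypothesis says \emph{Assign Types} cannot abort; hence Theorem~\ref{alg-correct} delivers an $S$-representation. Your LP/relative-interior framing for choosing centers is a legitimate alternative to the labeling loop --- it accomplishes the same thing, namely arranging that every pair at center-distance exactly $1$ is \emph{forced} there by a value-$0$ cycle --- and your Farkas/longest-path justification for feasibility is correct.

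Where your account drifts is in the last paragraph. You describe the consistency problem as one of \emph{merging} representations of several overlapping value-$0$ cycles, possibly using Lemma~\ref{CD-swap} to reconcile $C/D$ choices. That is not what Section~\ref{alg-sec} does, and $CD$-swap plays no role in the proof. The actual mechanism is a \emph{partition}, not a merge: the algorithm produces sets $X_0, X_1, \ldots$ such that (Proposition~\ref{Xr-forcing-cycle}) each $X_r$ is exactly the point set of a \emph{single} value-$0$ forcing cycle --- obtained by concatenating, through the pass's base point $v_0$, all the small value-$0$ cycles that pin down the elements of $X_r$. The hypothesis then applies once to each $X_r$ as a whole, yielding one $S$-representation per part with no reconciliation needed. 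Pairs straddling two different $X_r$'s are type-independent by Lemma~\ref{compatible-lem}, so the per-part representations simply sit side by side. In your generic-center picture, the analogous statement is: the connected components of the ``tight-constraint'' graph each lie on a single value-$0$ cycle (concatenate through a common vertex), so the hypothesis applies component-by-component and no cross-component merging is ever required. That is the missing observation that dissolves the obstacle you flagged.
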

 We conclude this section by considering several of the posets shown in Figures~\ref{separating-posets} and \ref{F-posets}, using forcing cycles to characterize those $S$ for which they are $S$-orders.

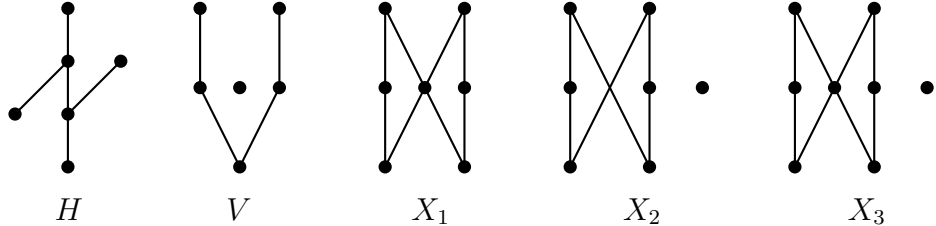
\begin{figure}
\begin{center}
 \begin{picture}(350,100)(0,0)
\thicklines

 \put(20,20){\circle*{5}}
\put(20,40){\circle*{5}}
\put(20,60){\circle*{5}}
\put(20,80){\circle*{5}}
\put(0,40){\circle*{5}}
\put(40,60){\circle*{5}}

\put(20,20){\line(0,1){60}}
\put(20,40){\line(1,1){20}}
\put(20,60){\line(-1,-1){20}}
 \put(15,0){$H$}

  \put(85,20){\circle*{5}}
\put(70,50){\circle*{5}}
\put(85,50){\circle*{5}}
\put(100,80){\circle*{5}}
\put(70,80){\circle*{5}}
\put(100,50){\circle*{5}}

\put(85,20){\line(1,2){15}}
\put(85,20){\line(-1,2){15}}
\put(70,80){\line(0,-1){30}}
\put(100,80){\line(0,-1){30}}
 \put(80,0){$V$}

 \put(140,20){\circle*{5}}
\put(140,50){\circle*{5}}
\put(140,80){\circle*{5}}
 \put(170,20){\circle*{5}}
\put(170,50){\circle*{5}}
\put(170,80){\circle*{5}}
\put(155,50){\circle*{5}}
\put(140,20){\line(0,1){60}}
\put(170,20){\line(0,1){60}}
\put(140,20){\line(1,2){30}}
\put(170,20){\line(-1,2){30}}
 \put(150,0){$X_1$}

 \put(210,20){\circle*{5}}
\put(210,50){\circle*{5}}
\put(210,80){\circle*{5}}
 \put(240,20){\circle*{5}}
\put(240,50){\circle*{5}}
\put(240,80){\circle*{5}}
\put(260,50){\circle*{5}}
\put(210,20){\line(0,1){60}}
\put(240,20){\line(0,1){60}}
\put(210,20){\line(1,2){30}}
\put(240,20){\line(-1,2){30}}
 \put(230,0){$X_2$}
 
 \put(295,20){\circle*{5}}
\put(295,50){\circle*{5}}
\put(295,80){\circle*{5}}
 \put(325,20){\circle*{5}}
\put(325,50){\circle*{5}}
\put(325,80){\circle*{5}}
\put(345,50){\circle*{5}}
\put(310,50){\circle*{5}}
\put(295,20){\line(0,1){60}}
\put(325,20){\line(0,1){60}}
\put(295,20){\line(1,2){30}}
\put(325,20){\line(-1,2){30}}
\put(315,0){$X_3$}

  \end{picture}

\end{center}
\caption{Five separating examples for the Venn diagrams in Figures~\ref{Venn-2-fig} and \ref{Venn-3-fig}.}

\label{separating-posets}
 \end{figure}

\begin{figure}
\begin{center}
 \begin{picture}(300,100)(0,0)
\thicklines

\put(20,20){\circle*{5}}
\put(20,40){\circle*{5}}
\put(20,60){\circle*{5}}
\put(20,80){\circle*{5}}
\put(40,50){\circle*{5}}
\put(20,20){\line(0,1){60}}
 \put(5,0){\fourone}

\put(80,30){\circle*{5}}
\put(80,50){\circle*{5}}
\put(80,70){\circle*{5}}
\put(95,50){\circle*{5}}
\put(110,50){\circle*{5}}
\put(80,30){\line(0,1){40}}
 \put(60,0){\threeoneone}
 
 \put(150,20){\circle*{5}}
\put(150,40){\circle*{5}}
\put(150,60){\circle*{5}}
\put(150,80){\circle*{5}}
\put(130,40){\circle*{5}}
\put(170,60){\circle*{5}}

\put(150,20){\line(0,1){60}}
\put(150,20){\line(1,2){20}}
\put(150,80){\line(-1,-2){20}}
 \put(145,0){$Z$}

  \put(220,20){\circle*{5}}
\put(220,80){\circle*{5}}
\put(205,50){\circle*{5}}
\put(235,50){\circle*{5}}
\put(250,50){\circle*{5}}

\put(220,20){\line(1,2){15}}
\put(220,20){\line(-1,2){15}}
\put(205,50){\line(1,2){15}}
\put(235,50){\line(-1,2){15}}
 \put(220,0){$D$}

  \put(300,20){\circle*{5}}
\put(300,50){\circle*{5}}
\put(285,80){\circle*{5}}
\put(315,80){\circle*{5}}
\put(315,50){\circle*{5}}

\put(300,20){\line(0,1){30}}
\put(300,50){\line(1,2){15}}
\put(300,50){\line(-1,2){15}}
 
 \put(295,0){$Y$}

  \end{picture}

\end{center}
\caption{Forbidden posets which, with the dual of $Y$, comprise the set ${\cal F}$ of Theorem~\ref{AB-charac}.}

\label{F-posets}
 \end{figure}
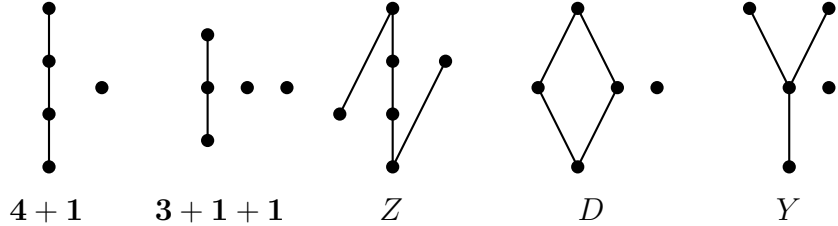

 \begin{example}  {\rm   In this example, we consider several posets and provide detailed proofs that show each is correctly positioned in the Venn diagrams of Figures~\ref{Venn-2-fig} and \ref{Venn-3-fig}.  Section~\ref{appendix} contains the proofs for the remaining posets appearing in these 
Venn diagrams.
 
\medskip

 \noindent  {\bf The poset $\twotwo$.}  This poset has the forcing cycle ${\cal C}: x \prec y \parallel z \prec w \parallel x$ with $val({\cal C}) = 0$.  Suppose $\twotwo$ were an $S$-order and without loss of generality, fix an $S$-representation $\cal I$ in which each interval has length 2 and $c(x) =0$.           Theorem~\ref{wd-thm} implies $c(y) = 1, c(z) = 0$ and $c(w) = 1$.      Using Observation~\ref{obs-centers}, none of $I_x, I_y,I_z,I_w$ can be type $B$ because   $x \parallel w$ and $z \parallel y$, and therefore,  because $x \prec y$ and $z \prec w$, none of these intervals can be type $A$.    The only possibility  using intervals of types $C$ and $D$ is for $I_x$ and $I_y$ to be type $C$ and $I_z$ and $I_w$ to be type $D$ as in Figure~\ref{CD-fig} (or vice versa).

 \medskip
 
\noindent {\bf The poset $\threeone$. } This poset has the forcing cycle ${\cal C}: x \prec y  \prec z \parallel u  \parallel x$ with $val({\cal C}) = 0$.  Suppose $\threeone$ were an $S$-order and without loss of generality, fix an $S$-representation $\cal I$ in which each interval has length 2 and $c(x) =0$.       Now Theorem~\ref{wd-thm} implies $c(y) = 1, c(z) = 2$ and $c(u) = 1$.    Since $x \parallel u$ and $z \parallel u$, by Observation~\ref{obs-centers}, none of the intervals $I_x, I_z, I_u$ can be type $B$.  Observation~\ref{obs-centers} now implies that $\threeone$ is not a $BC$-order, since that would require $I_y$ to be type $B$ and $I_x, I_z, I_u$ to be type $C$. Similarly, $\threeone$ is not a $BD$-order. 
   It is not hard to check that $\threeone$ \emph{is} an $S$-order for all other $S$ with $|S| = 2$.  

 \medskip
 
\noindent {\bf   The poset $\fourone$. } This poset has the forcing cycle ${\cal C}:  x \prec y \prec z \prec w \parallel v \parallel x$ with $val({\cal C}) = 1$.  By Theorem~\ref{wd-thm},    $\fourone$ is not an $S$-order for any $S$.
 
 \medskip

\noindent {\bf   The poset $V$. }  All forcing cycles in $V$ have value at most 0, yet $V$ is not an $S$-order for any $S$. Since $V$ has an induced $\twotwo$, if it were an $S$-order,  without loss of generality, the four elements of $\twotwo$ would have the representation given in Figure~\ref{CD-fig}.  It is easy to check that this representation cannot be extended to an $ABCD$-representation of all of $V$.

    \medskip
    
 \noindent  {\bf  The poset $Z$.}  This poset has the forcing cycle ${\cal C}:  x \prec y \prec z \prec w \parallel v \parallel u \parallel x$ with $val({\cal C}) = 0$.     Suppose $Z$ were an $S$-order  for some $S$.  Without loss of generality, fix an $S$-representation  $\cal I$ of $Z$ in which all intervals have length 2 and     $c(x) = 0$.  Now Theorem~\ref{wd-thm} implies that $c(y) = 1, c(z) = 2, c(w) = 3, c(v) =2, c(u) = 1$.  

Each element of $Z$ is incomparable to another element where the two centers  differ by 1, so by  Observation~\ref{obs-centers}, none of the intervals in any $S$-representation can be of type $B$.   Thus it suffices to consider representations using intervals of types $A$, $C$, and $D$. 
  A representation is possible if $S= \{A,C\}$ (namely by making $I_x,I_y,I_z, I_w$ of type $C$,  and $I_u, I_v$ of type $A$).   By  Lemma~\ref{CD-swap} it follows that  a representation is also possible for $S= \{A,D\}$. 
 However, $Z$ is not a $CD$-order as we now show.
 If  there were a $CD$-representation of $Z$, then without loss of generality we may assume $I_x$ is type $C$.  It then follows that $I_y$, $I_z$ and $I_w$ are also of type $C$ and $I_u$ and $I_w$ must be type $D$, a contradiction since $u \parallel v$.

 }
 \label{F-example}
 \end{example}

\begin{figure}
\begin{center}
\includegraphics[height=3in]{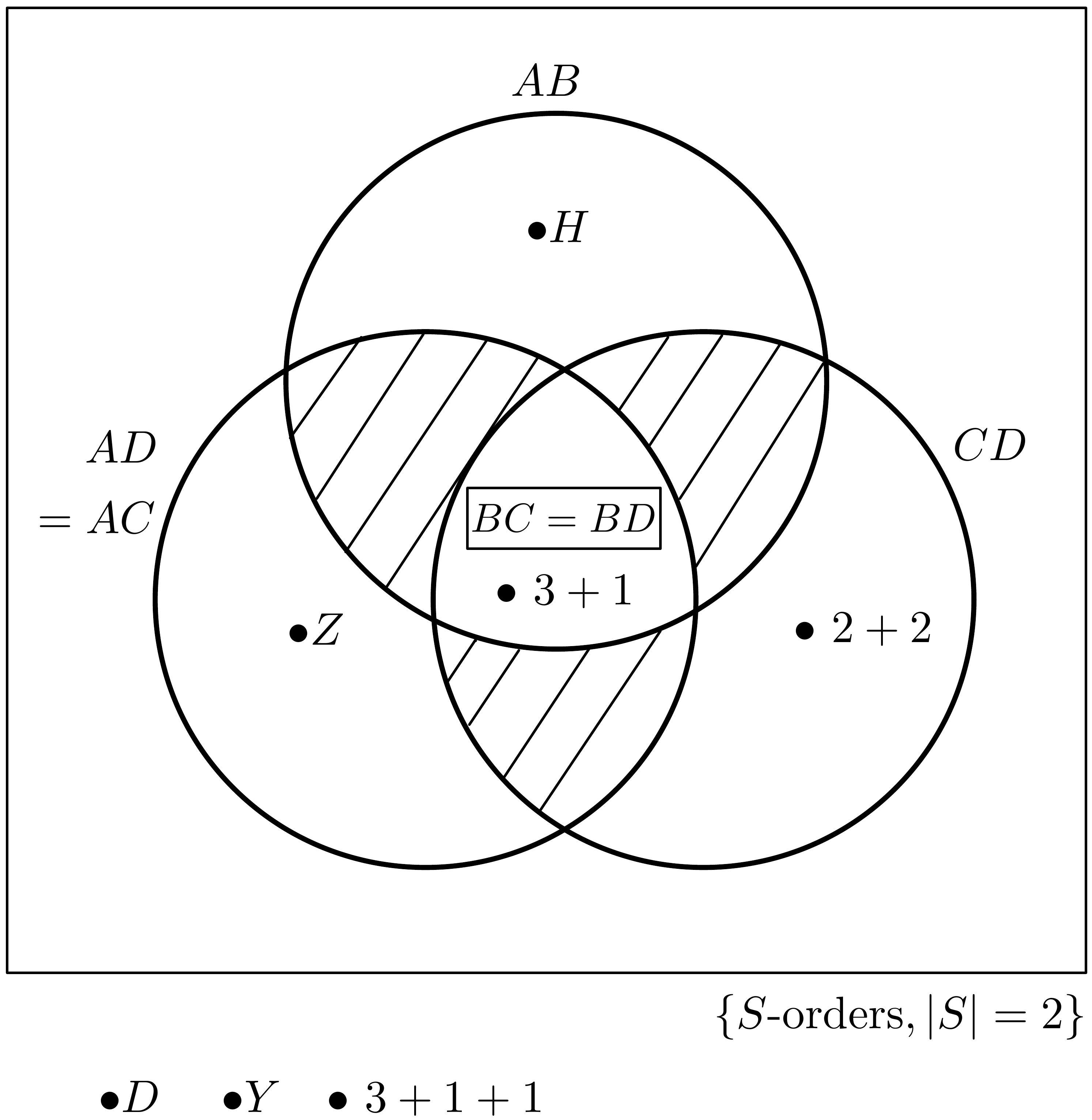}
\end{center}
\caption{Venn diagram showing  classes of twin-free $S$-orders when $|S| = 2.$}
\label{Venn-2-fig}

\end{figure}

\begin{figure}
\begin{center}
\includegraphics[height=3in]{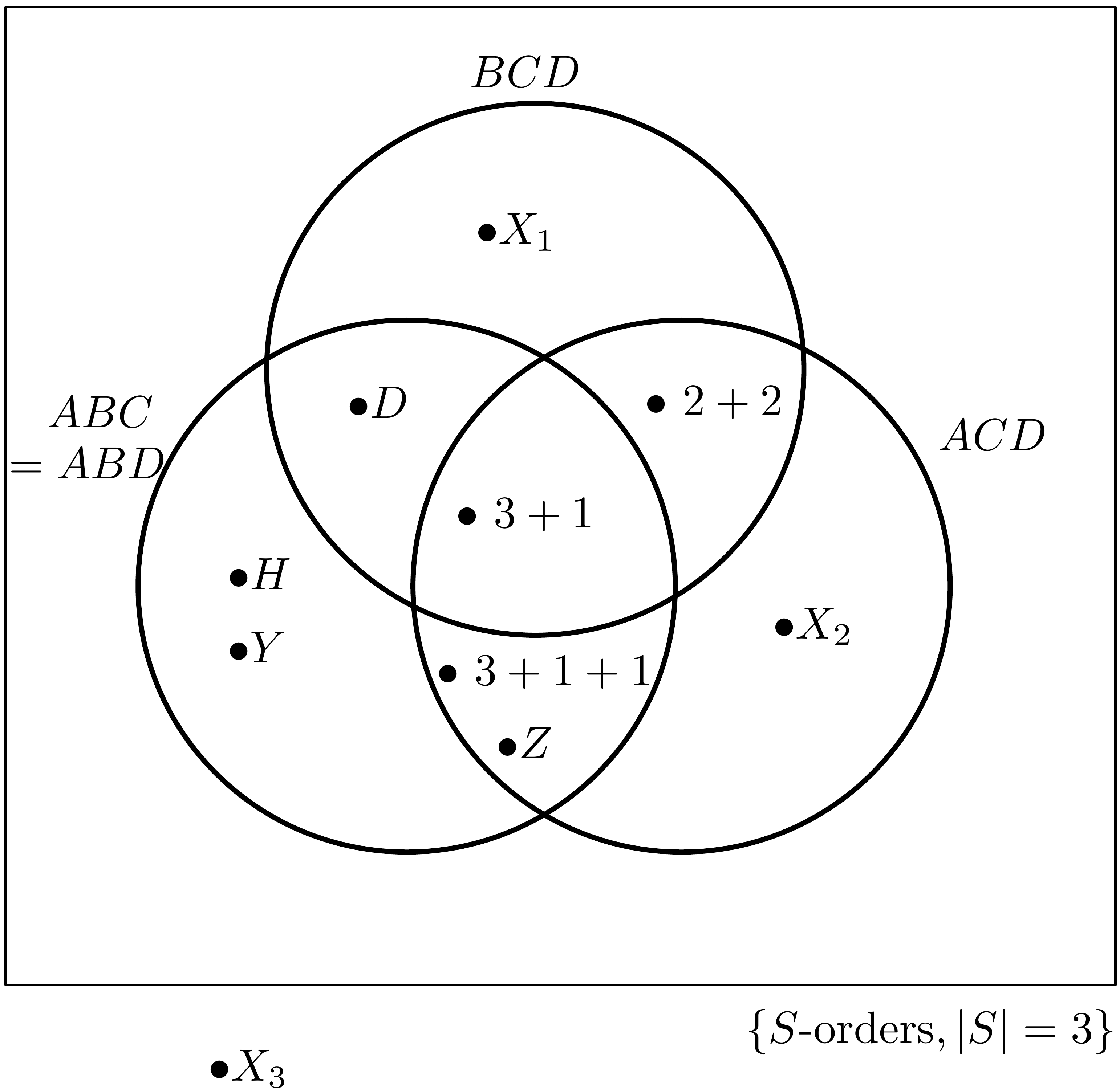}
\end{center}
\caption{Venn diagram showing  classes of twin-free $S$-orders when $|S| = 3.$}
\label{Venn-3-fig}
\end{figure}

\section{The case $|S| = 2$.}
\label{s-equals 2-sec}

Because of Lemma~\ref{CD-swap}, there are four classes of $S$ orders with $|S| = 2$, namely $AB$, $AC=AD$, $BC = BD$, and $CD$.  These are shown in Figure~\ref{Venn-2-fig}.   Proposition~\ref{BC-charac-prop} characterizes $BC$-orders and Theorem~\ref{AB-charac}
characterizes $AB$-orders.

\begin{prop} A poset is a $BC$-order if and only if it is a unit interval order.
\label{BC-charac-prop}
\end{prop}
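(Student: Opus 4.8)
The plan is to prove the two implications separately, exploiting the fact that both type $B$ and type $C$ intervals carry \emph{open} centers. The backward direction is almost immediate: if $P$ is a unit interval order, then by Theorem~\ref{singleton-thm} applied to $S=\{B\}$ it has a $\{B\}$-representation, and since every type $B$ interval is permitted in a $BC$-representation, that same collection already witnesses $P$ as a $BC$-order. No further work is needed here.

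For the forward direction, suppose $P=(X,\prec)$ is a $BC$-order and fix a $BC$-representation in which all intervals have length $2$. The crucial point is that no type $D$ interval occurs and both $B$ and $C$ have open centers, so the third bullet of Observation~\ref{obs-centers} always yields a comparability: when $c(x)\le c(y)$ and $c(y)-c(x)=1$, either at least one of $I_x,I_y$ is type $B$ (the $BB$, $BC$, $CB$ cases) or both are type $C$ (the $CC$ case), and in every instance $x\prec y$. Combining this with the other two bullets of Observation~\ref{obs-centers} collapses the representation to a single clean rule: for $c(x)\le c(y)$, we have $x\prec y$ if and only if $c(y)-c(x)\ge 1$. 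Thus the centers alone encode $P$ through a non-strict threshold of~$1$.

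It then remains only to convert this into a genuine unit interval representation, whose defining inequality is \emph{strict}. Since $X$ is finite, $M=\max\{c(y)-c(x) : c(y)-c(x)<1\}$ satisfies $M<1$, so one may choose $\delta>0$ with $(1+\delta)M\le 1$ and rescale each center to $g(x)=(1+\delta)c(x)$; this sends every gap of size $\ge 1$ to a gap strictly larger than $1$ while keeping every gap of size $<1$ at most $1$. Assigning to $x$ the unit interval $[g(x),g(x)+1]$ then realizes $x\prec y \iff g(y)-g(x)>1$, a unit interval representation, so $P$ is a unit interval order. The only genuine obstacle is this final rescaling---checking that one uniform factor simultaneously strictifies all the tight ($=1$) gaps without introducing spurious comparabilities among the slack ($<1$) ones. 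This computation can be sidestepped entirely by instead invoking the heredity of $BC$-orders together with Example~\ref{F-example} (neither $\twotwo$ nor $\threeone$ is a $BC$-order) and the Scott-Suppes Theorem, which forces $P$ to be a unit interval order.
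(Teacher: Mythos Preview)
Your proposal is correct. The backward direction matches the paper's exactly (both appeal to Theorem~\ref{singleton-thm}). For the forward direction the paper's argument is precisely the alternative you mention at the end: it invokes Example~\ref{F-example} to see that neither $\twotwo$ nor $\threeone$ is a $BC$-order, and then applies the Scott--Suppes Theorem.

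Your primary forward argument is genuinely different and more constructive. The key observation---that in a $BC$-representation every pair at center distance exactly $1$ is comparable, so the order is encoded by the single rule $x\prec y \iff c(y)-c(x)\ge 1$---is a nice structural point that the paper does not isolate. The rescaling step is sound: with $M=\max\{c(y)-c(x):c(y)-c(x)<1\}<1$ and any $\delta\in(0,\tfrac{1}{M}-1]$ (or any $\delta>0$ when $M\le 0$), the map $g=(1+\delta)c$ sends gaps $\ge 1$ to gaps $>1$ and gaps $<1$ to gaps $\le 1$, so $[g(x),g(x)+1]$ is a bona fide unit interval representation. This route avoids the external Scott--Suppes characterization entirely and yields an explicit representation; the paper's route is shorter but non-constructive.
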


\begin{proof}
By Theorem~\ref{singleton-thm}, we know that unit interval orders are $BC$-orders.  Conversely, if $P$ is a $BC$-order, it contains neither an induced $\twotwo$ nor an induced $\threeone$, as shown in Example~\ref{F-example}.  Thus, by the Scott-Suppes Theorem \cite{ScSu58}, $P$ is a unit interval order.
 $\qed$

\end{proof}

\begin{theorem}  The following are equivalent for a twin-free poset $P$.
\begin{enumerate}
\item  $P$ is a unit OC interval order.
\item  $P$ is an $AB$-order.
\item  $P$  is an interval order that does not contain as an induced poset any element of the set $\cal F$ consisting of the five  posets   of Figure~\ref{F-posets} and the dual of $Y$. 
\end{enumerate} 
 \label{AB-charac}
\end{theorem}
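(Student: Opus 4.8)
The plan is to prove the three conditions equivalent through the cycle $(2)\Rightarrow(1)\Rightarrow(3)\Rightarrow(2)$, obtaining the transformation $(1)\Rightarrow(2)$ along the way so that $(1)$ and $(2)$ are interchangeable. The equivalence $(1)\Leftrightarrow(2)$ is a representation transformation and costs little; the combinatorial necessity $(1)\Rightarrow(3)$ is a finite check; and the constructive sufficiency $(3)\Rightarrow(2)$ is where the real work lies.

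For $(1)\Leftrightarrow(2)$ I would use a center-preserving dictionary between the two kinds of representation. Scale a unit OC interval representation so that every interval has length $1$, and write $c(x)$ for the midpoint of $I_x$. For length-$1$ intervals the defining comparability of a unit OC interval order reads: $x\prec y$ exactly when $c(y)-c(x)>1$ if $I_x$ and $I_y$ are both closed, and when $c(y)-c(x)\ge 1$ if at least one of them is open. Now form an $AB$-representation on the same ground set by placing a length-$2$ interval with the same center $c(x)$ at each point and declaring it type $A$ when $I_x$ was closed and type $B$ when $I_x$ was open. By Observation~\ref{obs-centers} the new representation has $x\prec y$ precisely when $c(y)-c(x)>1$, or $c(y)-c(x)=1$ with at least one interval of type $B$; comparing with the two cases above shows the comparabilities agree, so $(1)\Rightarrow(2)$. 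Reading the same dictionary in reverse converts any length-$2$ $AB$-representation into a length-$1$ unit OC representation (open interval iff type $B$), giving $(2)\Rightarrow(1)$.

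To prove $(1)\Rightarrow(3)$ I would first recall that OC interval orders without the unit restriction are exactly the interval orders, so every unit OC interval order is an interval order and in particular has no induced $\twotwo$. It remains to verify that no member of $\cal F$ is a unit OC interval order, which by $(1)\Leftrightarrow(2)$ is the same as verifying that none is an $AB$-order. The poset $\fourone$ has a forcing cycle of positive value, so by Theorem~\ref{wd-thm} it is not an $S$-order for any $S$. For each of $\threeoneone$, $Z$, $D$, $Y$ and the dual of $Y$, the argument mirrors the treatment of $\threeone$ and $Z$ in Example~\ref{F-example}: a value-$0$ forcing cycle pins the centers by Theorem~\ref{wd-thm}, Observation~\ref{obs-centers} forbids type $B$ at the relevant places, and one checks that realizing the remaining tight incomparabilities forces an interval of type $C$ or $D$; hence none can be drawn with types $A,B$ alone.

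The hard part is $(3)\Rightarrow(2)$, where an $AB$-representation must be produced from the forbidden-subposet hypothesis. I would route this through Theorem~\ref{sewing}, which reduces the problem to two tasks: (a) every forcing cycle of $P$ has value at most $0$, and (b) the subposet induced by each value-$0$ forcing cycle is itself an $AB$-order. Task (a) follows by showing that an interval order with a forcing cycle of positive value must contain an induced $\fourone$, contradicting $(3)$; this is proved by analyzing a shortest such cycle. For task (b), Theorem~\ref{wd-thm}(iii) fixes every center of such a cycle at an integer translate of $c(x_0)$, so the induced subposet is ``layered'' and the only decision left is the assignment of types; the crux is to show that avoiding $Z$, $D$, $Y$, the dual of $Y$, and $\threeoneone$ lets one satisfy all the center-distance-$1$ incomparabilities and comparabilities using only types $A$ and $B$, taking every interval type $A$ except where an incident tight comparability forces it open to type $B$. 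Proving that these five posets are exactly the local obstructions to a consistent $A/B$ labelling—and that Theorem~\ref{sewing} then interlaces the per-cycle representations into one global $AB$-representation—is the delicate step that justifies the precise list $\cal F$.
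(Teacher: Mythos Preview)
Your $(1)\Leftrightarrow(2)$ and $(1)\Rightarrow(3)$ are fine and match the paper in spirit; the paper's transformation for $(1)\Rightarrow(2)$ perturbs endpoints rather than preserving centers, but either works, and the forbidden-subposet checks you describe are exactly what the paper does in Example~\ref{F-example} and Section~\ref{appendix}.

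The substantive divergence is $(3)\Rightarrow(2)$. The paper does \emph{not} prove this internally: it invokes Theorem~12 of \cite{ShShTr16} for $(1)\Leftrightarrow(3)$, so $(3)\Rightarrow(1)\Rightarrow(2)$ is immediate once $(1)\Rightarrow(2)$ is established. Your route via Theorem~\ref{sewing} is more ambitious---a self-contained argument---but as written it has real gaps. For task (a), the claim that an interval order with a positive-value forcing cycle must contain an induced $\fourone$ is plausible (a shortest such cycle has $\mathrm{up}=3$, $\mathrm{side}=2$, and transitivity plus the absence of $\twotwo$ pin down a $4$-chain with an incomparable element), but ``analyzing a shortest such cycle'' hides the case analysis over the possible orderings of ups and sides along the cycle. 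For task (b), you assert that avoiding $Z$, $D$, $Y$, $Y^d$, and $\threeoneone$ allows a consistent $A$/$B$ type assignment on every value-$0$ cycle, but this is precisely the combinatorial heart of the characterization---it is essentially the content of the external theorem you would be reproving. The rule ``type $A$ unless a tight comparability forces type $B$'' is not obviously well-defined (a vertex may be incident to both a tight comparability and a tight incomparability), and showing that the listed obstructions are \emph{exactly} what can go wrong requires a structural analysis in the spirit of Lemma~\ref{xyz-lemma} but in the reverse logical direction. Until those details are supplied, the implication $(3)\Rightarrow(2)$ is not established by your outline.
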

\begin{proof}
Since unit OC interval orders are interval orders, the equivalence of (1) and (3) is shown in Theorem~12 of  \cite{ShShTr16}. 
 
 \smallskip

 \noindent
 $(1) \Longrightarrow (2)$.    Let $ P= (X,\prec)$ be a unit OC interval order and fix a unit OC interval representation  of $P$  in which each interval has length $\lambda$. Let $\ell_x, r_x$ be the endpoints of the interval assigned to $x \in X$ and let $\epsilon$ be the smallest distance between distinct endpoints in the OC representation. We transform these intervals as follows.  Let $L(x) = \ell_x - (\lambda - \epsilon/4)$ and   $R(x) = r_x + \epsilon/4$. For each $x \in X$, let the interval $I_x$ have endpoints $L(x), R(x)$, and note that 
 $I_x$ has length  $2\lambda$. Define $I_x$  to be of type $A$ ($B$) if the corresponding OC interval was closed (open).  One can then check that the intervals $I_x$ provide an $AB$-representation for $P$.
  
 \smallskip
 
 \noindent
 $(2) \Longrightarrow (3)$.  
 Let $P$ be a twin-free $AB$-order.  Since $\twotwo$ is not an $AB$-order, poset $P$ is an interval order.    In Example~\ref{F-example}  and Section~\ref{appendix} we show $P$ cannot contain an induced poset in $\cal F$, i.e., any of the five  posets   of Figure~\ref{F-posets} or the dual of $Y$.  \qed
\end{proof}

\medskip

The next result is a technical lemma that describes the structure of value 0 forcing cycles that can exist in certain twin-free    $S$-orders   where $|S| \le 2$.    The statement of Lemma~\ref{xyz-lemma}
refers to poset $H$ shown in Figure~\ref{separating-posets}
and poset $Z$ shown in Figure~\ref{F-posets}.
 We will often simplify our notation here and in the rest of the paper when the meaning is clear, e.g., ``$v$ is type $C$''  means that $I_v$ is a type $C$ interval.

\begin{lemma}
Let $P$ be a twin-free $S$-order that satisfies one of the following:  (i) a $CD$-order with no induced $\twotwo$,   (ii)  an $AB$-order with no induced $H$, or (iii) an $AC$-order with no induced $Z$.  Then a forcing cycle $\cal C$ has value 0 in $P$ precisely when it has the following form.  The set of elements in $\cal C$ can be written as the union of three sets  $\{x_i: i = 0,2,4, \ldots, 2q\} \cup  \{y_i: i = 1,3,5, \ldots, 2q-1\} \cup \{z_i: i = 1,3,5, \ldots, 2q-1\}$ for some $q$.  Furthermore, for consecutive elements $u,v$ of $\cal C$, 
\begin{align*}
&\hbox{$u \prec v$ if and only if for some $i$, ($u = x_i$, $v = y_{i+1}$) or ($u = y_i$, $v = x_{i+1}$);}
\\
&\hbox{$u \parallel v$ if and only if for some $i$, ($u = x_i$, $v = z_{i+1}$) or ($u = z_i$, $v = x_{i+1}$). }
\end{align*}
\label{xyz-lemma}
\end{lemma}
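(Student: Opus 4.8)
The plan is to work entirely with the centers of a length-$2$ $S$-representation, which Theorem~\ref{wd-thm} pins down exactly on any value-$0$ cycle. First I would fix such a representation and, for a value-$0$ forcing cycle $\mathcal{C}: w_0, w_1, \ldots, w_t$ with $c(w_0)=0$, invoke Theorem~\ref{wd-thm} to get $c(w_i) = val_{\mathcal{C}}(w_i)$ for all $i$. Since each edge of $\mathcal{C}$ changes $val$ by exactly $\pm 1$, the centers trace an integer walk that steps up by $1$ on each comparability ($w_i \prec w_{i+1}$) and down by $1$ on each incomparability ($w_i \parallel w_{i+1}$), beginning and ending at $0$. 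In this language the asserted form says precisely that the walk reverses direction only at the $x$-vertices; equivalently, that every maximal monotone run of steps has even length, so that the reversal points all share the same parity. Granting this, the labeling writes itself: place the $x_i$ at even positions, call an odd-position vertex $y_i$ when its two incident steps both go up and $z_i$ when they both go down, and the two displayed comparability rules hold by construction. The backward direction is then a direct count, each $y_i$ contributing two comparabilities and each $z_i$ two incomparabilities, with the balance between the $\{y_i\}$ and $\{z_i\}$ families dictated by $\mathcal{C}$ returning to its starting element, so that $up(\mathcal{C})=side(\mathcal{C})$ and $val(\mathcal{C})=0$.

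The substance is therefore to rule out odd-length monotone runs, and this is where the hypotheses on $S$ and on the forbidden subposet enter. I would translate each edge into a constraint on interval types using Observation~\ref{obs-centers}: on a value-$0$ cycle consecutive centers differ by exactly $1$, so in case (i) ($CD$) a step is up iff its two intervals share a type and down iff they are of opposite types; in case (iii) ($AC$) a step is up iff both intervals are type $C$ and down iff at least one is type $A$; and analogously for case (ii). The prototype is case (i) with a run of length one, i.e.\ a pattern $w_{j-1}\parallel w_j \prec w_{j+1}\parallel w_{j+2}$. The two outer vertices then have equal centers, hence are incomparable, and reading off types (e.g.\ $w_j, w_{j+1}$ type $C$ forces the two flanking intervals to be type $D$) one locates a second comparability among the four vertices with all four cross-pairs incomparable, i.e.\ an induced \twotwo. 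Since \twotwo\ is excluded in case (i), no such run can occur, and a similar type computation disposes of longer odd runs.

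For cases (ii) and (iii) the class itself contains \twotwo, so the local reversal no longer yields a contradiction on its own; instead the analogous computation must be pushed along the cycle to assemble the larger forbidden poset, and I expect this to be the main obstacle. The guiding observation is that the value-$0$ cycle of $Z$ exhibited in Example~\ref{F-example} consists of an up-run and a down-run each of length three, so in case (iii) an odd run of length three is exactly the data of an induced $Z$; the task is to verify from the type constraints above that an odd run in an $AC$-order always carries such a copy of $Z$, and symmetrically that an odd run in an $AB$-order with no induced $H$ cannot occur because it would force an induced $H$. Throughout I would use twin-freeness to guarantee that vertices forced to have equal centers and matching comparabilities are nonetheless distinct elements in the configurations being built, and I would accommodate the possibility that $\mathcal{C}$ revisits a vertex by arguing with the centers (well defined on elements) rather than with positions. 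Once every maximal monotone run is shown to have even length in each of the three cases, the reversal points share a parity, the $x/y/z$ labeling is legitimate, and the two directions combine to give the stated equivalence.
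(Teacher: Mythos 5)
Your setup matches the paper's: fix a length-$2$ representation, use Theorem~\ref{wd-thm} to pin each center to $val_{\cal C}$, and observe that the centers perform a $\pm 1$ walk on $\{0,1,\dots,t\}$. The backward direction and the translation of each step into a type constraint via Observation~\ref{obs-centers} are also fine. The gap is in the central step: you propose to rule out odd-length maximal monotone runs by exhibiting the forbidden poset \emph{locally}, inside a single odd run together with its two flanking elements. That works only in special cases and provably fails in general. In case (i), a length-$5$ up-run forces six type-$C$ intervals at centers $c_0,\dots,c_0+5$ and type-$D$ flanks at centers $c_0+1$ and $c_0+4$; a direct check shows these eight elements contain no induced $\twotwo$ (every candidate pair of $2$-chains has a comparable cross-pair), so no ``similar type computation'' disposes of this run locally. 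In cases (ii) and (iii) the situation is worse: a length-$1$ odd run plus its flanks is only four elements, while $H$ and $Z$ each have six, so even the shortest odd run cannot ``carry'' the forbidden poset. (Your length-$3$ observation for $Z$ is correct, but it is the exception, not the prototype.)

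The paper avoids runs entirely and argues center by center, which is why it succeeds where the local argument cannot. From the walk it extracts two global facts: every center $i<t$ hosts some cycle element that steps up, and every center $i>0$ hosts some element that steps down --- these witnesses need not be adjacent on the cycle, or anywhere near each other. Hence no two consecutive centers can each carry a single element. The forbidden poset is then used to show no two consecutive centers can each carry \emph{both} admissible types: the two types at center $i$ and the two at center $i+1$ (together with, in cases (ii) and (iii), the forced intervals at centers $i-1$ and $i+2$) induce $\twotwo$, $H$, or $Z$ respectively, with twin-freeness supplying distinctness and the single-element endpoints $0$ and $t$ handling the boundary. These constraints force the element counts $1,2,1,2,\dots,2,1$ across centers, and the labeling by type and center parity then gives the stated form. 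If you want to salvage your run-based framing, you would still need essentially this center-counting argument to locate the forbidden configuration away from the offending run, so the reformulation buys little; as written, the proposal does not close the main case.
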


\begin{proof}  Let ${\cal C}: v_0, v_1, v_2,  \ldots, v_n$ be a forcing cycle in $P$ with $val({\cal C}) = 0$.   Fix an $S$-representation of $P$ and without loss of generality, assume each interval has length 2 and the minimum value of $c(v_j)$ is $c(v_0)$ and equals 0.  By  Theorem~\ref{wd-thm},   for all $j$ we have  
$c(x_j) =   val_{\cal C}(x_j)   $, so $c(v_{j+1}) = c(v_j) + 1$ if $v_j \prec v_{j+1}$ and $c(v_{j+1}) = c(v_j) - 1$ if $v_j \parallel v_{j+1}$. Thus the values of $c(v_j)$ form the set $\{0,1,2,\ldots, t\}$ for some integer $t$.   As a result, we have the following:
\begin{align}
& \hbox{for  $0 \le i \le t-1$, there exists $j$ with $c(v_j) = i$ and $v_j \prec v_{j+1}$;}
\label{eq-1}
  \\
 & \hbox{for  $1 \le i \le t$, there exists $k$ with $c(v_k) = i$ and  $v_k \parallel v_{k+1}$.}
\label{eq-2}
 \end{align}
   
 Combining these, we see that in the $S$-representation restricted to the elements of $\cal C$, there cannot be consecutive integers in $\{0,1,2,\ldots, t\}$  where each is the center of  only one interval.
  
Now consider  case  (i) where $S = \{C,D\}$ and $P$ has no induced $\twotwo$.  Using  Lemma~\ref{CD-swap}, we may assume $v_0$ is type $ C$.   Since $v_n = v_0$ and $c(v_0) = 0$ is the minimum value for $c(v_j)$, we have  $v_0 \prec v_1$, $v_{n-1} \parallel v_n$.  Thus $v_1$ is  type $C$ and $v_{n-1}$ is type $D$, both with center 1.  If there were two consecutive centers \emph{each} with a type $C$ and a type $D$ interval, then these four elements would induce a $\twotwo$ in $P$, a contradiction.  Therefore  in any $S$-representation of the poset induced by the elements of $\cal C$, there must be two intervals with center $i$ when $i$ is odd, and one interval with center $i$ when $i$ is even.  Because of this structure and the choice to make $v_0$ type $C$,  whenever $v_j \prec v_{j+1}$ on $\cal C$ both $v_j$ and $v_{j+1}$ must be type $C$.    Thus   for $i = 0,1,2, \ldots, t$, there is a type $C$ interval at center $i$, and for $i = 1,3,5, \ldots, t-1$, there is a type $D$ interval, and hence $t$ is even.    We now relabel the elements of $\cal C$ according to their interval types and centers as follows.  
When $i = 0,2,4, \ldots t$, let $x_i$ be the type $C$ interval, and when $i = 1,3,5, \ldots t-1$, let $y_i$  (resp. $z_i$) be the type $C$ (resp. $D$) interval.  This gives the desired form. 

Next consider  case (ii) where $S = \{A,B\}$ and $P$ has no induced poset isomorphic to $H$.  By (\ref{eq-2}), there exists a type $A$ interval with center $i$ for $i = 0,1,2,\ldots, t$.  Let $x_i$ be those with $i$ even and $z_i$ be those with $i$ odd.   For $1 \le j \le n-1$, if $c(v_j) = 0$  (resp. $c(v_j) = t$) then $v_{j-1} \parallel v_j$ (resp. $v_j \parallel v_{j+1}$) and $v_j$ cannot be type $B$.  Since $P$ is twin-free, there is thus only one element of $\cal C$ with center 0 and one  with center $t$ and both of these are type $A$.  If there were consecutive centers $i, i+1$ each with a type $A$ and a type $B$ interval, then those four elements together with  the type $A$ intervals at centers $i-1$ and $i+2$  would induce the poset $H$ in $P$, a contradiction.  Thus type $B$ intervals must exist at center $i$ for $i = 1,3,5, \ldots t-1$ and $t$ must be even.  Let $y_i$ be the type $B$ interval with $c(y_i) = i$.  This gives the desired form.

Finally, consider  case (iii) where $S = \{A,C\}$ and $P$ has no induced poset isomorphic to $Z$.  By (\ref{eq-1}), there exists a type $C$ interval with center $i$ for $i = 0,1,2,\ldots, t$.  Let $x_i$ be those with $i$ even and $y_i$ be those with $i$ odd.   For $1 \le j \le n-1$, if $c(v_j) = 0$  (resp. $c(v_j) = t$) then $v_{j} \prec v_{j+1}$ (resp. $v_{j-1} \prec v_{j}$) and $v_j$ cannot be type $A$.  Thus there is only one element of $\cal C$ with center 0 and one  with center $t$ and both of these are type $C$.  If there were consecutive centers $i, i+1$ each with a type $A$ and type $C$ interval, then those four elements together with the type $C$ intervals at centers $ {i-1}$ and $ {i+2}$ would induce the poset $Z$ in $P$, a contradiction.  As in case (ii), type $A$ intervals exist only at center $i$ for $i = 1,3,5, \ldots t-1$ and $t$ is even.  Let $z_i$ be the type $A$ interval with $c(z_i) = i$.  This gives the desired form.
\qed
\end{proof}

The following corollary shows that the shaded regions of Figure~\ref{Venn-2-fig} are empty.  Furthermore, it shows that each of  the posets $\twotwo$,   $Z$, and $H$ that appear in Figure~\ref{Venn-2-fig}  is the unique minimally forbidden poset in its region of that Venn diagram. 

\begin{prop}
  (i)  Any $CD$-order  with no induced $\twotwo$ is  an $AB$-order and an $AC$-order.
    
  (ii)  Any $AC$-order with no induced poset isomorphic to $Z$ is an $AB$-order and  a $CD$-order.

(iii)  Any $AB$-order with no induced poset isomorphic to $H$ is a $CD$-order and an $AC$-order.

Moreover, the shaded regions of Figure~\ref{Venn-2-fig} are empty.
\label{venn-prop}
\end{prop}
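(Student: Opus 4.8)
The plan is to derive all three containments uniformly from Theorem~\ref{sewing} and Lemma~\ref{xyz-lemma}, and then to read off the emptiness of the shaded regions as a logical consequence. First I would reduce to the twin-free case, which is harmless by the remarks in the introduction. In each of (i), (ii), (iii) the poset $P$ is already an $S$-order for some $S$ (namely $S = CD$, $AC$, $AB$ respectively), so by part~(i) of Theorem~\ref{wd-thm} it has no forcing cycle of positive value; thus the hypothesis $val({\cal C}) \le 0$ of Theorem~\ref{sewing} holds automatically. Consequently, to show that $P$ belongs to a target class $S'$, it suffices by Theorem~\ref{sewing} to show that the subposet induced by each value-$0$ forcing cycle is an $S'$-order.

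The heart of the argument is the observation that Lemma~\ref{xyz-lemma} assigns to the value-$0$ forcing cycles in all three situations the \emph{same} combinatorial skeleton: elements $x_i$ (even centers), $y_i$ and $z_i$ (odd centers), with $c(x_i)=c(y_i)=c(z_i)=i$, where the $y_i$ are the comparability partners of the $x_i$ and the $z_i$ are the incomparability partners. By Observation~\ref{obs-centers}, once the centers are fixed the only relations not yet determined are those between elements whose centers differ by exactly $1$; and in each of the three type schemes---$x_i,y_i$ type $C$ and $z_i$ type $D$; or $x_i,z_i$ type $A$ and $y_i$ type $B$; or $x_i,y_i$ type $C$ and $z_i$ type $A$---these distance-$1$ relations come out identically (the pairs $x_i$--$y_{i\pm 1}$ comparable, the pairs $x_i$--$z_{i\pm 1}$ incomparable). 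Hence all three schemes realize one and the same poset $Q$, so every such $Q$ is simultaneously an $AB$-, an $AC$-, and a $CD$-order. Feeding this into Theorem~\ref{sewing} finishes each part: in (i), where $P$ is a $CD$-order, the subposet induced by each value-$0$ cycle is a poset $Q$ of this kind, which is both an $AB$- and an $AC$-order, so $P$ is both; parts (ii) and (iii) are identical with the roles of the three classes permuted according to the appropriate case of Lemma~\ref{xyz-lemma}.

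For the final assertion I would identify the shaded regions of Figure~\ref{Venn-2-fig} as the three ``pairwise intersection minus the third class'' regions and show each is empty, using that an induced subposet of an $S$-order is again an $S$-order together with two facts already available: an $AB$-order is an interval order and so has no induced $\twotwo$, and $Z$ is neither an $AB$-order (it lies in ${\cal F}$, by Theorem~\ref{AB-charac}) nor a $CD$-order (Example~\ref{F-example}). For instance, a poset in $AB\cap CD$ has no induced $\twotwo$ because it is an $AB$-order, so by part~(i) it is an $AC$-order, emptying $AB\cap CD\setminus AC$; a poset in $AB\cap AC$ has no induced $Z$ (being an $AB$-order), so by part~(ii) it is a $CD$-order, emptying $AB\cap AC\setminus CD$; and a poset in $AC\cap CD$ has no induced $Z$ (being a $CD$-order), so by part~(ii) it is an $AB$-order, emptying $AC\cap CD\setminus AB$. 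The separating examples $\twotwo$, $Z$, $H$ then occupy the three remaining private regions, confirming that those are the only nonempty ones.

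I expect the main obstacle to be the bookkeeping in the central paragraph: matching the three different labelings produced by the three cases of Lemma~\ref{xyz-lemma} so that ``$y$'' consistently denotes the comparability partner and ``$z$'' the incomparability partner, and then verifying---via Observation~\ref{obs-centers}---that the distance-$1$ comparabilities are genuinely type-independent, so that the three type assignments really do yield the identical poset $Q$. Everything else is a direct application of Theorems~\ref{wd-thm} and \ref{sewing}.
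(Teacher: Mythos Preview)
Your proposal is correct and follows essentially the same route as the paper. The paper's proof of part~(i) invokes Theorem~\ref{wd-thm}(i) to rule out positive-value cycles, appeals to Theorem~\ref{sewing} to reduce to the induced poset on each value-$0$ cycle, cites Lemma~\ref{xyz-lemma} for the $x_i,y_i,z_i$ structure, and then writes down the very type assignments you list ($x_i,z_i$ type~$A$, $y_i$ type~$B$ for the $AB$-representation; $x_i,y_i$ type~$C$, $z_i$ type~$A$ for the $AC$-representation), declaring (ii) and (iii) ``similar.'' Your presentation is slightly more unified in that you observe up front that all three cases of Lemma~\ref{xyz-lemma} produce the same induced poset $Q$, so one verification of the three type schemes covers all parts at once; the paper handles each part separately but with identical mechanics. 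For the shaded regions the paper says only ``From (i), (ii), and (iii) we conclude that the shaded regions of Figure~\ref{Venn-2-fig} are empty,'' whereas you spell out which forbidden subposet is excluded by membership in which class---this is a genuine elaboration, and your use of Theorem~\ref{AB-charac} and Example~\ref{F-example} for the needed exclusions ($\twotwo\notin AB$, $Z\notin AB$, $Z\notin CD$) is exactly what the paper's one-line conclusion is implicitly resting on.
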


\begin{proof}  
To prove (i), suppose $P$ is an $CD$-order with no induced $\twotwo$.  By (i) of Theorem~\ref{wd-thm},
 $val({\cal C}) \le 0$ for every forcing cycle  $\cal C$ in $P$.    By Theorem~\ref{sewing}, it suffices to prove that for any forcing cycle $\cal C$ in $P$ with $val({\cal C}) = 0$, the poset induced by the points of $\cal C$ is both an $AB$-order and an $AC$-order. Using 
 Lemma~\ref{xyz-lemma},  we know the form of forcing cycle $\cal C$.  If we let $x_i$  and $z_i$ be    type $A$, and $y_i$ be  type $B$, we get an $AB$-representation of the  poset induced by the elements of $\cal C$.  Likewise, if we let $x_i$ and $y_i$ be type $C$ and $z_i$ be type $A$, we get an $AC$-representation of this poset.  
  The proofs of (ii) and (iii) are similar.  From (i), (ii), and (iii) we conclude that the shaded regions of Figure~\ref{Venn-2-fig} are empty. \qed
 
\end{proof}

\section{Recognizing $S$-orders}
\label{alg-sec}

In this section we present an algorithm that recognizes $S$-orders, providing a representation when one exists and a certificate otherwise.
A related algorithm appears in \cite{Tr98}.  As noted at the end of Section~\ref{intro-sec},
the restrictions to posets that are twin-free and  inseparable is not substantive.

 We begin with an overview of the algorithm.  In forming an $S$-representation of a poset $P$, each element is assigned a center and a type.  The latter is accomplished in a procedure called \emph{Assign Types}, which we present after the main algorithm.
    To initialize the algorithm, a starting element $v_0$ is chosen and its center $c(v_0)$ is fixed.  The center $c(v_i)$ of each remaining element $v_i$ is assigned a lower bound $\ell(v_i)$ and an upper bound $u(v_i)$.   
    The algorithm proceeds in passes where lower bounds may increase and upper bounds may decrease,  but at all times, if an $S$-representation is possible, then one exists in which each center lies within its bounds.

   Each pass starts with an element whose center is fixed.  In a given pass, we either identify a forcing cycle with value greater than zero and terminate the algorithm, or we identify all points that lie on a value 0 forcing cycle that also includes the starting point of that pass.  
   
   If such a value 0 forcing cycle ${\cal C}$ exists, the centers of its elements are fixed and the procedure \emph{Assign Types} determines if the elements on ${\cal C}$ have an $S$-representation with these centers.  If not, the algorithm terminates.  Once a center is fixed during a pass, it remains fixed during all subsequent passes.
     
  For elements $x, y$ whose centers are fixed in different passes of the algorithm, achieving the appropriate relationship between them ($x \prec y, \ y \prec x$, or $x \parallel y$) is independent of the types of intervals used and depends only on the placement of $c(x)$ and $c(y)$.   In pass $r$, $V_r$ is the set of elements $v_i$ with $\ell(v_i) < u(v_i)$ at the start of pass $r$.  During pass $r$, changes are made in these bounds by making pairwise comparisons with the bounds of other elements of $V_r$, during the \emph{Labeling Loop}. We use a $\{0,1\}$-matrix $M$ and tracking functions $f,g$ for this purpose. In particular, $f(v_j) = v_i$ means that $\ell(v_j)$ was most recently changed by considering the ordered  pair $(v_i,v_j)$, and $g(v_j) = v_i$ similarly means that $u(v_j)$ was most recently changed.
When $M_{ij} = 0$, this signals that    the pair $(v_i, v_j)$ must   be considered  (again) in the narrowing process.

In pass $r$, if the labeling loop terminates without the entire algorithm terminating, we let  $X_r$ be the set of elements $v_j$ in $V_r$ for which $\ell(v_j) = u(v_j)$.  For these elements, we set $c(v_j) = \ell(v_j)$ and run procedure \emph{Assign Types}. If the algorithm continues to pass $r+1$, the elements in $X_r$ no longer need to be actively considered and we let $V_{r+1} = V_r - X_r$.

  \medskip
  
  \noindent
  {\bf Algorithm  Recognize $S$-Orders}
  \medskip

\noindent
{\bf Input:}  A twin-free, inseparable  poset   $P = (V,\prec)$  and a non-empty subset $S$ of $\{A,B,C,D\}$.

\noindent
{\bf Output: }  Either an $S$-representation of $P$ or a certificate showing that $P$ is not an $S$-order.

\noindent
{\bf Data Structure needed:}
An $|V|  \times |V|$ array $M$  whose entries are either 0 or 1.    In pass $r$, only the first $|V_r|$ rows and columns of $M$ are used.
     
       \smallskip

\noindent
{\bf Procedure Narrowing Steps (\emph {NS}):} (to be used repeatedly in the \emph{Labeling Loop}):

\begin{enumerate}
\item If $v_i \prec v_j$
 and $\ell(v_j) < \ell(v_i) + 1$,
increase $\ell(v_j)$  to $\ell(v_i) + 1$ and  set $f(v_j) = v_i$. 
 \item If $v_i \parallel v_j$
   and $\ell(v_j) < \ell(v_i) - 1$, 
  increase $\ell(v_j)$   to $\ell(v_i) - 1$ and  set $f(v_j)  = v_i$.
\item If $v_i \succ v_j$
  and $u(v_j) > u(v_i) - 1$, 
 decrease $u(v_j)$ to $u(v_i) - 1$ and set $g(v_j) = v_i$.
\item  If $v_i \parallel v_j$
  and $u(v_j) > u(v_i) + 1$,
  decrease $u(v_j)$  to $u(v_i) + 1$ and set $g(v_j) = v_i$.
\end{enumerate}

\noindent
{\bf Start of Algorithm:}

Set $r = 0$. Let $V_0 = V,$ and choose some $v_0 \in V_0$.  Set $ c(v_0) :=0$, $ \ell(v_0) :=0$,  and $ u(v_0) :=0$.

For each $v \in V_0, v \ne v_0$, set $\ell(v) = -|V| -1$ and $u(v) = |V| + 1$.

\medskip
\noindent
{\bf Start of Pass $r, r \geq 0$:}

Let $n_r = |V_r|$ and label the remaining elements of $V_r$ so that  $V_r = \{v_i: i = 0, 1, 2, 3, \ldots, n_r\}$. 

For $j = 1,2,3, \ldots, n_r$, set $f(v_j) = g(v_j) = nil$. 

For $i,j$ satisfying $1 \le i,j \le n_r$, set \[M_{ij} := \left\{ \begin{array}{ll}
  0   & \mbox{ for all  $i \neq j$} \\
  1 & \mbox{ if $i=j$   .} 
  \end{array}
  \right. \]  
  
  Run Procedure \emph{NS} for $i=0$ and $j = 1,2,3, \ldots, n_r$.

  \medskip

\noindent
{\bf Labeling Loop:}

Choose $(i,j)$ with $1 \le i,j \le n_r$ and  $M_{ij} = 0$.  If no such pair exists,  let $X_r := \{v_i \in V_r: \ell(v_i) = u(v_i)\}$ and  exit the {\it Labeling Loop}.

Run Procedure \emph{NS} for the pair $(i,j)$.

If $\ell(v_j) > u(v_j)$, end the entire algorithm and report that $P$ is not an $S$-order for any $S$.  (By Proposition~\ref{trace-thm}, $P$ is not an $S$-order, and the proof of that theorem shows how to produce a forcing cycle with value greater than 0.)

If neither $\ell(v_j)$ nor $u(v_j)$ is changed, set $M_{ij} = 1$.
Otherwise, set the nondiagonal entries in row $j$ and column $j$ of $M$ to 0.

Repeat the {\it Labeling Loop}.

  \medskip
  
\noindent
{\bf Procedure Assign Types (details following Example~\ref{proc-example}):}

Set $c(x) = \ell(x)$ for each $x \in X_r$.   

Run Procedure \emph{Assign Types} (with elements ordered by their indices) to produce either an $S$-representation of    $P$ restricted to the elements of $X_r$ or a certificate that no such representation exists.   In the latter case, terminate the algorithm.  

\medskip

\noindent
{\bf Preparing for Next Pass:}

Let $V_{r+1} = V_r - X_r$. Designate an element of $V_{r+1}$ to be $v_0$. Choose  $c(v_0) = \ell(v_0) + \frac{1}{2^{r+1}}$ and  update $\ell(v_0) := c(v_0)$ and $u(v_0) := c(v_0)$.

Begin pass $r+1$.

\medskip

 \noindent
 {\bf End of Algorithm:}
 
The algorithm terminates when all elements of $P$ have a designated center and type, producing an $S$-representation of $P$, or terminates during either the \emph {Labeling Loop} or Procedure \emph{Assign Types}, proving that no such representation exists.

Before presenting the procedure for assigning types, we define compatible type assignments and provide an example.  

\begin{defn} {\rm
Let $S$ be a subset of $\{A,B,C,D\}$.  Let $Q$ be a poset together with an assignment of a real number center to each element of $Q$ and, for each 
 $j$, a
 list of the  $t_j$ elements of $Q$ with center $j$.
A \emph{type assignment for center $j$} is a
 list of $t_j$ distinct elements of $S$; this determines an interval for each element of $Q$ with center $j$ by matching corresponding entries in the two lists.     Type assignments for centers $j$ and $j+1$ are \emph{compatible} if the assigned intervals provide an $S$-representation for the poset induced in $Q$ by the $t_j + t_{j+1}$ elements   with centers at $j$ or $j+1$.

}
\label{defn-assign}
\end{defn}

We illustrate this definition with the following example.  

\begin{example}
{\rm Consider the poset $Z$ given in Example~\ref{F-example} where $t_0 = 1, t_1 = t_2 = 2$ and $t_3= 1$.  Order the elements of $Z$ according to  the forcing cycle ${\cal C}: x \prec y \prec z \prec w \parallel v \parallel u \parallel x$.  This induces an order of the elements at each center (e.g., $yu$ at center 1, $zv$ at center 2).  The type assignment $CA$ at center 1 is compatible with $CD$ at center 2, but not with $DC$ at center 2.  The type assignment $CD$ at center 2 is compatible with $C$ at center 3 but not with any of $A, B,$ or $D$.
}

\label{proc-example}
\end{example}
\medskip
  
  \noindent
  {\bf Procedure Assign Types}
  \medskip
  
  \noindent
  {\bf Input: } A subset $S$ of $\{A,B,C,D\}$, a value 0  forcing cycle ${\cal C}: x_0,x_1, x_2, \ldots, x_n$,  the poset $Q$ induced by the elements of $\cal C$, and a center $c(x_i)$ assigned to each $x_i$ so that $c(x_i) = c(x_0) + val_{\cal C}(x_i)$.
  
  \noindent
  {\bf Output:}  Either an $S$-representation of $Q$ or the conclusion that $Q$ is not an $S$-order.
  
  The elements with center $j$ are ordered according to their first occurrence in $\cal C$.   
  Let $m = \min\{c(x_i): 0 \le i \le n\}$ and $M =  \max \{c(x_i): 0 \le i \le n\}$. (By construction, $M-m$ is an integer.)
  For each $j = m, m+1, m+2, \ldots, M$, let $t_j$ be the number of elements of $Q$ with center $j$.  If $t_j > |S|$ for any $j$, terminate the procedure and report that $Q$ is not an $S$-order.  Otherwise, for each $j$, create a list of nodes, one for each ordered list of $t_j$ distinct elements of $S$.  Each node consists of three fields:   (i) a type assignment, (ii) a set of pointers (initially empty) back to nodes at $j-1$, and  (iii) a set of pointers (initially empty) forward  to nodes at $j+1$. 
  
  \smallskip
  
  \noindent
  {\bf Initialize} $j := m$.
  
    \smallskip
    
  \noindent
  {\bf Loop:}
  For each  node $T_1$ at center $j$ and each node $T_2$ at center $j+1$,  if the type assignments are compatible, add a forward pointer from $T_1$ to $T_2$ and a backward pointer from $T_2$ to $T_1$.
  Delete all  nodes at center $j+1$ with no backward pointers. 
  
   If there are no nodes remaining at center $j+1$, terminate the procedure, report that $Q$ is not an $S$-order, and return the forcing cycle $\cal C$.
  
   If $j+1 < M$, increment $j$ and begin the loop again.
   
   If $j+1 = M$, an $S$-representation of $Q$ exists.  One can be obtained by starting at a node with center $M$ and following backward pointers through nodes at each center to obtain an $S$-representation.  \qed

  \medskip
 
 \begin{example} 
 {\rm 
  We continue Example~\ref{proc-example} where $m = 0$ and $M = 3$. For simplicity, we refer to each node at a given center by its type assignment.  The node $C$ at center 0 has forward links to nodes $BA, BD, CA$, and $CD$ at center 1, but neither  $BA$ nor $BD$ has forward links to nodes at center 2.  At center 3 there are 2 nodes with  backward links ($C$ and $D$), leading to a total of eight possible paths back to a node at center 0.  One such path gives the type assignments $C$ (center 0), $CA$ (center 1), $CD$ (center 2), $C$ (center 3), and the resulting $S$-representation in which $I_x, I_y, I_z, I_w$ are type $C$, $I_v$ type $D$, and $I_u$ type $A$.
  }
  \end{example}
  
  \smallskip
  
  We now present results to justify the correctness and complexity of Algorithm \emph{Recognize $S$-Orders}.
The next lemma ensures that after pass 0,  if the algorithm has not terminated, each lower and upper bound is finite.

 \begin{lemma} If Algorithm Recognize $S$-Orders does not terminate in pass 0, then at the end of pass 0, $\ell(v)$ and $u(v)$ are integers and   
 $-|V| \le \ell(v)\le u(v) \le |V|$ for all $v \in V$.
 \label{finite-lem}
 \end{lemma}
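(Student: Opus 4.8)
The plan is to read Procedure \emph{NS} as a Bellman--Ford--style relaxation of a system of difference constraints on the \emph{forcing digraph} $G$, whose vertex set is $V$ and which has an edge $a\to b$ of weight $w(a\to b)=+1$ when $a\prec b$ and weight $w(a\to b)=-1$ when $a\parallel b$ (so there is no edge $a\to b$ precisely when $b\prec a$). A directed path in $G$ is exactly a forcing trail in the sense of Definition~\ref{forcing_trail_def}, and its summed weight is its value $up-side$.

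Two of the three conclusions are immediate. For integrality, the initial values $0$ and $\pm(|V|+1)$ are integers and every case of Procedure \emph{NS} changes a bound by exactly $\pm1$, so all bounds stay integral throughout pass~$0$. For $\ell(v)\le u(v)$, a bound is altered only inside Procedure \emph{NS}, and immediately after each alteration the \emph{Labeling Loop} tests whether $\ell(v_j)>u(v_j)$; since by hypothesis the algorithm does not terminate in pass~$0$, this test never succeeds, so $\ell(v)\le u(v)$ holds at the end of pass~$0$ (and trivially at $v_0$, where both bounds are $0$).

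The heart of the argument is the two-sided numerical bound, and here the key step is a \emph{feasible-potential} claim: when pass~$0$ ends without termination, the bounds satisfy $\ell(b)\ge \ell(a)+w(a\to b)$ and $u(a)\le u(b)-w(a\to b)$ for every edge $a\to b$ of $G$, while $\ell(v_0)=u(v_0)=0$. For edges among the elements $v_i$ with $i\ge 1$ this follows from the terminal state of $M$: the \emph{Labeling Loop} exits only when every off-diagonal $M_{ij}=1$, and since each change to a bound of $v_i$ or $v_j$ resets the whole of row $j$ and column $j$ to $0$, the entry $M_{ij}=1$ certifies that the last application of \emph{NS} to $(i,j)$ produced no change and that no bound of $v_i$ or $v_j$ has changed since; thus all guards of \emph{NS} fail, which is precisely the two displayed inequalities. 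For edges incident to $v_0$, the single initial run of \emph{NS} (with $i=0$) establishes the inequalities, and they persist because $\ell(v_0),u(v_0)$ stay fixed while each $\ell(\cdot)$ only increases and each $u(\cdot)$ only decreases.

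Finally I would invoke inseparability to supply short forcing paths. If $R$ is the set of vertices reachable from $v_0$ in $G$ and $R\neq V$, then no edge leaves $R$ into $V\setminus R$, which by the edge convention means every element of $V\setminus R$ lies strictly below every element of $R$; taking these two blocks as a partition exhibits $P$ as separable, a contradiction, so $R=V$. The dual argument shows every vertex reaches $v_0$. Hence for each $v$ there is a simple forcing path $v_0\to\cdots\to v$ and a simple forcing path $v\to\cdots\to v_0$, each using at most $|V|-1$ edges. Telescoping the lower-bound inequalities along the first path gives $\ell(v)\ge \ell(v_0)+val\ge -(|V|-1)\ge -|V|$, where $val$ is the value of that path; telescoping the upper-bound inequalities along the second gives $u(v)\le -val'\le |V|-1\le |V|$. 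I expect the feasible-potential claim to be the main obstacle, since it requires reading off the exact invariant maintained by $M$, $f$, $g$ and handling the distinguished vertex $v_0$ and its single initial run separately; the inseparability step is what rules out the degenerate behaviour (for instance a chain with $v_0$ placed at the top) that would otherwise violate the stated lower bound.
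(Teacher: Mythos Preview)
Your proposal is correct and follows essentially the same route as the paper: both use inseparability to show every vertex lies on a forcing trail to and from $v_0$, and both deduce the numerical bounds from the propagation of Procedure \emph{NS} along such trails. The paper's proof is terser---it simply asserts that each reachable $y$ has $\ell(y)$ increased to at least $-|V|$ ``using steps \emph{NS} 1 and 2''---whereas you make the underlying mechanism explicit via the feasible-potential inequalities $\ell(b)\ge \ell(a)+w(a\to b)$ and $u(a)\le u(b)-w(a\to b)$, justified by the terminal state of $M$ (and, for edges at $v_0$, by monotonicity of the bounds after the single initial run). Your Bellman--Ford framing and the telescoping along simple paths are exactly what the paper's one-line claim rests on, so the two arguments coincide in substance; yours is simply more detailed.
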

 
 \begin{proof}  Let $Y$ be the set of all elements $y$ of $P$ for which there is a forcing trail in $P$ from $v_0$ to $y$.  During pass 0, using steps $NS$ 1 and 2, each element $y \in Y$ has $\ell(y)$ increased from $- |V| - 1$ to at least $-|V|$.   By the definition of $Y$, for all $x \in V-Y$ and all $y \in Y$, we have $x \prec y$.  Since $P$ is inseparable, we must have $V-Y = \emptyset$, thus $\ell(v) \ge -|V|$ at the end of pass 0 for all $v \in V$.  A symmetric argument shows $u(v) \le |V|$, and we know $\ell(v) \le u(v)$ since the algorithm did not terminate during pass 0.  By Procedure $NS$, we know $\ell(v)$ and $u(v)$ are integers.
 \qed
 \end{proof}

\begin{prop}
If $P$ contains a forcing cycle with value greater than 0, then Algorithm Recognize $S$-Orders will terminate during pass 0 and return a forcing cycle with value greater than 0.
\label{detect-fc}
\end{prop}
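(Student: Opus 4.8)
The plan is to establish the two claims in turn: that the algorithm halts inside pass~0, and that at the moment it halts the tracking functions $f,g$ yield a forcing cycle of positive value. The guiding principle is that procedure \emph{NS} is an algorithmic form of Lemma~\ref{val-lem}: steps \emph{NS}~1--2 keep $\ell(v)$ a lower bound on $c(v)$ coming from forcing trails directed out of $v_0$, while steps \emph{NS}~3--4 keep $u(v)$ an upper bound coming from trails directed into $v_0$. Since $\ell$ only increases and $u$ only decreases, in integer steps, I will read $f(v)$ and $g(v)$ as pointers recording the single step by which the current value of $\ell(v)$ (resp.\ $u(v)$) was last installed.

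For termination, I would assume for contradiction that the algorithm does not halt in pass~0. By Lemma~\ref{finite-lem} the pass then runs to completion with $-|V|\le \ell(v)\le u(v)\le |V|$, and because the \emph{Labeling Loop} exited with all $M_{ij}=1$ the final configuration is a fixed point of \emph{NS}. Writing $\delta_{ab}=+1$ when $a\prec b$ and $\delta_{ab}=-1$ when $a\parallel b$, being a fixed point gives two inequality families: (L)~$\ell(b)\ge \ell(a)+\delta_{ab}$ for every forcing-trail step $a\to b$ with $b\ne v_0$, and (U)~$u(b)\ge u(a)+\delta_{ab}$ for every such step with $a\ne v_0$. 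Given a forcing cycle ${\cal C}:x_0,\ldots,x_t=x_0$ with $val({\cal C})>0$, if $v_0\notin {\cal C}$ I sum (L) around ${\cal C}$: the left side telescopes to $\ell(x_t)-\ell(x_0)=0$ while the right side equals $val({\cal C})>0$, a contradiction. If $v_0\in {\cal C}$ I rotate so that $v_0=x_0=x_t$ (splitting off sub-cycles if $v_0$ recurs, one of which must still be positive), then chain (L) forward from $v_0$ and (U) backward into $v_0$ to obtain, for each interior $x_k$, both $\ell(x_k)\ge s_k$ and $u(x_k)\le s_k-val({\cal C})$ with $s_k=\sum_{j<k}\delta_j$; hence $\ell(x_k)-u(x_k)\ge val({\cal C})>0$, contradicting $\ell\le u$. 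Either way the non-termination assumption fails, so the algorithm halts in pass~0, and the only halting rule there is $\ell(v_j)>u(v_j)$.

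To produce the cycle, I fix the $v_j$ with $\ell(v_j)>u(v_j)$ at the halt and trace pointers. Following $f$ from $v_j$ gives, read in reverse, a forcing trail from its endpoint $a$ (where $f(a)=nil$) to $v_j$, because \emph{NS}~1--2 set $f(b)=a$ only when $a\prec b$ or $a\parallel b$; following $g$ forward gives a forcing trail from $v_j$ to an endpoint $b$ (where $g(b)=nil$). Each pointer points to an element whose relevant bound was set strictly earlier, so the chains cannot cycle and must terminate. Using monotonicity of the bounds (the recorded relations were installed with earlier bound values, and $\ell$ never fell, $u$ never rose afterward), telescoping gives $val(a\to v_j)\ge \ell(v_j)-\ell(a)$ and $val(v_j\to b)\ge u(b)-u(v_j)$. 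Concatenating produces a forcing trail ${\cal T}$ from $a$ to $b$ of value at least $(\ell(v_j)-u(v_j))+(u(b)-\ell(a))\ge 1+(u(b)-\ell(a))$. Each endpoint is either $v_0$ (giving $\ell(a)=0$ or $u(b)=0$) or an unprocessed vertex (giving $\ell(a)=-|V|-1$ or $u(b)=|V|+1$). If $a=b=v_0$ then ${\cal T}$ is itself a forcing cycle of value $\ge \ell(v_j)-u(v_j)>0$; otherwise $val({\cal T})>|V|$, so ${\cal T}$ is longer than any simple trail permits, must repeat a vertex, and a sub-trail between a repetition is a forcing cycle. Since deleting only nonpositive cycles would leave a simple trail of value at most $|V|-1$, some such cycle is positive. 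In every case the algorithm returns a forcing cycle of value greater than $0$.

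I expect the construction, not the termination argument, to be the main obstacle, and specifically two features of the algorithm that break the naive picture. First, because \emph{NS} may raise $\ell(b)$ using a source $a$ whose own bound is still at its initial value, the $f$- and $g$-chains need not reach $v_0$; this is exactly why the trail ${\cal T}$ must be allowed to end at an unprocessed vertex, and why the ``value exceeding $|V|$ forces a positive sub-cycle'' step replaces a direct concatenation at $v_0$. Second, because $f,g$ remember only the most recent update, the chains realize the bounds only as inequalities, so every value estimate must be pushed in the safe direction using monotonicity. A secondary wrinkle, handled above, is that $v_0$ is never a target of \emph{NS} (its center is pinned), which forces the termination argument to play (L) against (U) whenever a positive cycle passes through $v_0$.
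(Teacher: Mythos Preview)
Your termination argument is sound and close in spirit to the paper's: both exploit that, once the \emph{Labeling Loop} exits without halting, the final bounds satisfy the fixed-point inequalities (your families (L) and (U)), and both derive a contradiction from a positive-value cycle by telescoping. The paper chains only (L) forward along the cycle and finishes with a single step of the other kind (applying \emph{NS} to $(x_0,x_{t-1})$); you chain (L) forward and (U) backward symmetrically. Either route works. (Incidentally, in the paper the ``return a forcing cycle'' half of the statement is actually delegated to Proposition~\ref{trace-thm}; the proof of Proposition~\ref{detect-fc} itself only argues termination.)

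The gap is in your construction of the returned cycle. The assertion that ``each pointer points to an element whose relevant bound was set strictly earlier, so the chains cannot cycle'' is not justified. When $f(v_j)=v_i$, it is true that the value of $\ell(v_i)$ \emph{used at the moment of that update} was installed earlier; but $\ell(v_i)$ may be updated again afterward, and $f(v_i)$ then records the source of that \emph{later} update. Because the algorithm halts mid-loop the instant some $\ell>u$, the entry $M_{ij}$ zeroed after $\ell(v_i)$'s later update may never be revisited, leaving $f(v_j)=v_i$ even though the last update time of $v_i$ exceeds that of $v_j$. Thus following $f$ need not move monotonically back in time, and the $f$-chain (likewise the $g$-chain) can repeat. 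The paper treats this case explicitly: if the chain repeats, one isolates the repeating segment $x_0,\dots,x_t,x_0$, takes $x_t$ to be the last of these to have its lower bound increased, and compares the final $\ell(x_t)$ with its value $\ell$ just before that final increase; the same telescoping then gives $\ell(x_t)-\ell\le val({\cal C})$, so $val({\cal C})>0$. Without this case your argument is incomplete: your ``value exceeds $|V|$, so extract a positive sub-cycle'' step is only reached under the unproven assumption that both chains terminate.
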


\begin{proof}
Let ${\cal C}: x_0, x_1, x_2,  \ldots x_t$ be a forcing cycle in poset $P$ that has value greater than 0, and suppose for a contradiction that Algorithm \emph{Recognize $S$-Orders} continues to pass 1.   Therefore, Procedure $NS$ will be applied to  pairs $(v_0,v_j)$ for $j = 1,2,3, \ldots, n_0$ and to the pairs $(v_i,v_j)$ for $1 \le i,j \le n_0$ when $i \neq j$ until $M_{ij} = 1$ for all $i \neq j$.  

First consider the case in which $v_0$ is an element of $\cal C$ and without loss of generality  assume $v_0 = x_0$.  We will show that under these assumptions, we eventually have $\ell(x_{t-1}) > u(x_{t-1})$, a contradiction to our assumption that the algorithm does not terminate during pass 0.
We wish to show  that at the end of pass 0,
\begin{equation}
 \ell(x_i) \ge \ell(x_0) + val_{\cal C}(x_i)   \hbox{ for all } i  \hbox{ with }  0 \le i \le t.  
 \label{val-eqn}
\end{equation}

 For a contradiction, let $j$ be the smallest integer for which $\ell(x_j) < \ell(x_0) + val_{\cal C}(x_j)$  and note that $j \ge 1$.  Thus $\ell(x_{j-1}) \ge \ell(x_0) + val_{\cal C}(x_{j-1})$.    If $x_{j-1} \prec x_j$ we have
$$\ell(x_j) < \ell(x_0) +  val_{\cal C}(x_{j}) = \ell(x_0) +  val_{\cal C}(x_{j-1}) + 1 \le \ell(x_{j-1}) + 1.$$
When the lower bound of $x_{j-1}$ received its final value $\ell(x_{j-1})$ in pass 0, the matrix entry $M_{j-1,j}$ was set to 0 and Procedure $NS$ was applied to the pair $(x_{j-1}, x_j)$.  Since $\ell(x_j) < \ell(x_{j-1}) + 1$,  in this iteration of Procedure $NS$  the value of $\ell(x_j)$ would have increased, a contradiction.  We get a similar contradiction in the case $x_{j-1} \parallel x_j$. 

Applying (\ref{val-eqn}) when $j = t-1$ yields $\ell(x_{t-1}) \ge 0 +  val_{\cal C}(x_{t-1})$.
If $x_{t-1} \prec x_t$ then $val_{\cal C}(x_{t-1}) > -1$ and  $\ell(x_{t-1}) \ge val_{\cal C}(x_{t-1}) > -1$.  Applying Procedure $NS$ to $(x_0,x_{t-1})$ yields $u(x_{t-1}) \le -1$, thus   $\ell(x_{t-1}) > u(x_{t-1})$ and the algorithm would terminate in pass 0, a contradiction.  A similar contradiction is reached in the case  $x_{t-1} \parallel x_t$.

 Now consider the case that $v_0$ is not an element of $\cal C$. 
Let $\ell$ be the value of $\ell(x_0)$ when pass 0 ends. 
  As in the previous case,  $\ell(x_i) \ge \ell  +  val_{\cal C}(x_{i}) $ for $i $ satisfying $0 \le i \le t$.  When $i=t$ this yields $\ell(x_0) = \ell(x_t) \ge \ell  +  val_{\cal C}(x_{t}) > \ell $, a contradiction.     \qed

\end{proof}

 The next proposition is a technical result showing that when Algorithm \emph{Recognize  $S$-Orders}  terminates during the \emph{Labeling Loop}, there is a forcing cycle with value greater than 0.  Remark~\ref{nil-remark} and Definition~\ref{delta-def} help to simplify the proof.

\begin{remark}{\rm
In pass $r$ of Algorithm \emph{Recognize $S$-Orders},  after each application of Procedure $NS$, the following are equivalent for  $i \in \{1,2,3, \ldots, n_r\}$:

(a)  $f(v_i) \neq nil$

(b) $\ell(v_i)$ has been changed in pass $r$.

(c)  $f(v_i) = v_0$ or $f^2(v_i) \neq nil$.

An analogous statement is true for $g(v_i), u(v_i)$.
}
\label{nil-remark}
\end{remark}

\begin{defn}
Let $\cal {T}:$ $x_0, x_1, x_2, \ldots, x_t$ be a forcing trail.    For $0 \le k \le t-1$, define  $\delta_k = 1$ if $x_{k } \prec x_{k+1}$ and $\delta_k = -1$ if $x_{k} \parallel x_{k+1}$.   In addition, define $\delta_t = 1$ if $x_t \prec x_0$ and  $\delta_t = -1$ if $x_t \parallel x_0$. 
\label{delta-def}
\end{defn}

Note that by definition, $val({\cal T}) = \sum_{k=0}^{t-1} \delta_k$.  In the case that $x_t \prec x_0$ or $x_t \parallel x_0$, the forcing cycle  $\cal C$: $x_0,x_1, x_2, \ldots, x_t, x_0$ has $ val({\cal C}) =\sum_{k=0}^{t}\delta_k$.

\begin{prop}  If Algorithm Recognize $S$-Orders is run on an inseparable poset $P$ and terminates during the Labeling Loop, then there exists a forcing cycle $\cal C$ in $P$ with $val({\cal C}) > 0$.  Consequently, $P$ is not an $S$-order for any $S$.
\label{trace-thm}
\end{prop}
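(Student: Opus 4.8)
The plan is to read off a positive-value forcing cycle from the tracking functions $f$ and $g$ at the instant of termination. The Labeling Loop halts only when an application of Procedure $NS$ to some pair $(v_i,v_j)$ produces $\ell(v_j)>u(v_j)$; fix this moment. I would follow two pointer chains backward from $v_j$ to the fixed element $v_0$: the lower-bound chain $v_j,\,f(v_j),\,f^2(v_j),\ldots$ and the upper-bound chain $v_j,\,g(v_j),\,g^2(v_j),\ldots$. By Remark~\ref{nil-remark}, once a pointer is defined it either equals $v_0$ or forces the next pointer to be defined, so each chain advances until it reaches $v_0$; write the reversed $f$-chain as a forcing trail ${\cal T}_1:v_0\to\cdots\to v_j$ and the $g$-chain as a forcing trail ${\cal T}_2:v_j\to\cdots\to v_0$. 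The concatenation ${\cal C}={\cal T}_1$ followed by ${\cal T}_2$ is then a forcing cycle based at $v_0$, and since value is additive over concatenation, $val({\cal C})=val({\cal T}_1)+val({\cal T}_2)$.

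The heart of the argument is a pair of value inequalities that exploit the monotonicity of the bounds. When $f(b)=b'$ is set, steps 1--2 of $NS$ assign $\ell(b)$ the value $\ell(b')+\delta$, where $\delta=+1$ if $b'\prec b$ and $\delta=-1$ if $b'\parallel b$; this is the final value of $\ell(b)$, whereas $\ell(b')$ can only increase thereafter. Hence along each step $b'\to b$ of ${\cal T}_1$ the step value $\delta$ satisfies $\delta\ge\ell(b)-\ell(b')$ in the final bounds, and telescoping gives
\[
val({\cal T}_1)\ \ge\ \ell(v_j)-\ell(v_0)\ =\ \ell(v_j)-c(v_0).
\]
Symmetrically, each $g$-pointer fixes $u(b)$ from the then-current $u(b')$, which only decreases afterward, so the step values along ${\cal T}_2$ telescope to
\[
val({\cal T}_2)\ \ge\ u(v_0)-u(v_j)\ =\ c(v_0)-u(v_j).
\]
Adding these, $val({\cal C})\ge\bigl(\ell(v_j)-c(v_0)\bigr)+\bigl(c(v_0)-u(v_j)\bigr)=\ell(v_j)-u(v_j)>0$. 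The final sentence of the proposition is then immediate from part~(i) of Theorem~\ref{wd-thm}.

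The step I expect to require the most care is verifying that the two chains are genuinely well defined and terminate at $v_0$ rather than entering a spurious cycle, since the algorithm stops mid-loop and need not have stabilized every bound. I would handle this with Remark~\ref{nil-remark} to keep each chain advancing toward $v_0$, together with a timing observation: whenever a bound $\ell(b')$ was raised after $f(b)=b'$ was recorded, the row and column of $b'$ in $M$ were reset to $0$, so the pair $(b',b)$ is pending, and one checks that the single interrupting pair $(v_i,v_j)$ cannot corrupt the recorded chain. I also need that both $f(v_j)$ and $g(v_j)$ are non-$nil$ at termination, so that neither chain is empty; this follows because $\ell(v_j)>u(v_j)$ forces each of $\ell(v_j)$, $u(v_j)$ to have moved off its initial extreme value. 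Getting the two inequality directions right---lower bounds only rise, upper bounds only fall---is the other delicate point, and it is exactly what makes the slack terms cancel the $\pm c(v_0)$ and leave the strictly positive quantity $\ell(v_j)-u(v_j)$.
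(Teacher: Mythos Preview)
Your overall strategy---trace back the $f$- and $g$-chains from the offending element, telescope the step inequalities, and concatenate into a forcing cycle---is exactly the paper's approach, and your inequalities $val({\cal T}_1)\ge\ell(v_j)-c(v_0)$ and $val({\cal T}_2)\ge c(v_0)-u(v_j)$ are the right ones. There are, however, two genuine gaps.

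First, the chain $v_j,f(v_j),f^2(v_j),\ldots$ need not reach $v_0$: Remark~\ref{nil-remark} only keeps the chain advancing, it does nothing to prevent it from cycling among $v_1,\ldots,v_{n_r}$, and your ``timing observation'' about pending $M$-entries does not rule this out either. The paper does not try to exclude this case; it treats it head-on. If the chain repeats, the repeating portion (reversed) is itself a forcing cycle ${\cal C}:x_0,\ldots,x_t,x_0$, and by comparing the lower bound of the \emph{last}-updated element $x_t$ just before and after its final increase one gets $val({\cal C})\ge\ell(x_t)-\ell>0$. This is a separate argument you need to supply; the same applies to the $g$-chain.

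Second, your justification that both $f(v_j)$ and $g(v_j)$ are non-$nil$ is wrong for passes $r\ge1$. At the start of pass $r$ the bounds $\ell(v_j),u(v_j)$ are \emph{inherited} from pass $r-1$ (they are not reset to $\pm(|V|+1)$), while the pointers $f,g$ \emph{are} reset to $nil$. Hence $\ell(v_j)>u(v_j)$ only forces at least one bound to have changed during pass $r$, not both. The paper spends a full paragraph on this: assuming $\ell(v_m)$ changed in pass $r$, it uses the trail $\cal T$ together with the narrowing inequalities as they stood at the end of pass $r-1$ and the strict placement $c(v_0)=\ell(v_0)+\tfrac{1}{2^{r}}$ between the old bounds of $v_0$ to derive a contradiction from the assumption that $u(v_m)$ did \emph{not} change in pass $r$. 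That argument is essential and is missing from your plan.
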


\begin{proof}   
Suppose the algorithm  terminates during pass $r$ of the \emph{Labeling Loop} when  $\ell(v_m) > u(v_m)$ for some element $v_m$ of $P$.      We know that  at least one of $\ell(v_m), u(v_m)$ has changed during pass $r$, so by symmetry we may assume that $\ell(v_m)$ has changed and thus increased.      By Remark~\ref{nil-remark}, it is well-defined to  apply the function $f$ iteratively starting at $v_m$,  until this sequence either terminates at $v_0$ or repeats.   Let  $\hat{\cal T}$  be the resulting  sequence  $v_m, f(v_m), f^2(v_m), \ldots$.  By the relations in steps $NS$, the reverse of any segment of $\hat{\cal T}$ is a forcing trail in $P$. 

 First suppose $\hat{\cal T}$ has a repeating element, so that its reverse contains a forcing cycle $\cal C$. Let ${\cal C}:  x_0, x_1, x_2, \ldots, x_t, x_0$, where   $x_0, x_1, x_2,  \ldots, x_t$ are distinct elements.   
 By construction, we know $f(x_k) = x_{k-1}$ for each $k \ge 1$ and $f(x_0) = x_{t}$.  
 Without loss of generality,  we may assume $x_t$ is the last of these elements to have its lower bound increased.  Let $\ell$ be the value of the lower bound of $x_t$ just before this final increase, thus $\ell < \ell(x_t)$.   When the ordered pair $(x_t,x_0)$ is last considered by Procedure $NS$, the lower bound of $x_0$ is increased, because $f(x_0) = x_t$.   At that time, the lower bound of $x_t$  has value at most $\ell$ because $x_t$ is the last  element on $\cal C$ to have its lower bound increased. Thus,
 the lower bound of $x_0$ satisfies  $\ell(x_0) \le \ell + \delta_t$.

   By Procedure $NS$, we have  $\ell(x_{k+1}) \le \ell(x_k)    + \delta_k \  $ for $0 \le k \le t-1$.  (Indeed, equality holds when the narrowing step is applied to the pair $(x_k,x_{k+1})$ but it is possible that $\ell(x_k)$ increased subsequently.)    Summing these inequalities for $k = 0, 1,2,  \ldots, t-1$ and subtracting  $\sum_{k=1}^{t-1} \ell(x_k)$ from both sides yields the first inequality below.  Substituting $\ell(x_0) \le \ell + \delta_t$  yields the second.
   
   $$ \ell(x_t) \le \ell(x_0) + \sum_{k=0}^{t-1} \delta_k \le \ell + \sum_{k=0}^t \delta_k.$$
   Now   replacing $\sum_{k=0}^{t}\delta_k $ by $val({\cal C})$  yields
   $\ell(x_t) - \ell \le val({\cal C})$.  Since $\ell(x_t) > \ell$, we have   produced a forcing cycle (${\cal C}$) with $val({\cal C}) > 0$.

Next, suppose the sequence $\hat{\cal T}$ has no repeating elements.  Using Remark~\ref{nil-remark} and the fact that   $f(v_i)$ is   defined for all $i$ with $0 < i \le n_r$, we conclude that  the sequence $\hat{\cal T}$ must end at $v_0$, so  $f^t(v_m) = v_0$ for some $t$.  Let $\cal T$ be the reverse of $\hat{\cal T}$ and write $\cal T$ as the sequence $v_0=x_0, x_1, x_2, \ldots, x_t=v_m$.  Now $\cal T$ is a forcing trail from $v_0$ to $v_m$ and by construction,  $\ell(x_0) = \ell(v_0) = c(v_0)$.  We consider the values of the lower and upper bounds when the algorithm terminates with $\ell(v_m) > u(v_m)$.    As before, by Procedure $NS$, we have  $\ell(x_{k+1}) \le \ell(x_k) +    \delta_k $ for $0 \le k \le t-1$.  Summing these inequalities for $k = 0,1,2, \ldots, t-1$, replacing $\sum_{k=0}^{t-1} \delta_k$  by $val ({\cal T})$, and subtracting  $\sum_{k=1}^{t-1} \ell(x_k)$ from both sides, we obtain

\begin{equation}
\label{ell-ineq}
val({\cal T}) \ge \ell(x_t) - \ell(x_0) = \ell(v_m) - c(v_0).
\end{equation}


We next consider the upper bounds and first show that $u(v_m)$  \emph{must} have decreased during pass $r$.  Suppose for a contradiction that $u(v_m)$ has not changed during pass $r$.  Since the initial value assigned to $u(v_m)$ is $|V| + 1$, by Lemma~\ref {finite-lem}
we know  $r \ge 1$.   The element labeled $v_0$ in pass $r$ had its lower and upper bounds set to $c(v_0)$ at the end of pass $r-1$.  Let $\ell, u$ be the values of its lower and upper bounds  in pass $r-1$ just before the step \emph{Preparing for Next Pass}.  Thus at 
  the beginning of pass $r$ we have  $\ell < c(v_0) < u$.  We continue to use the forcing trail ${\cal T}: x_0, x_1, x_2, \ldots, x_t$ where $v_0 = x_0$ and $v_m = x_t$.    At the end of pass $r-1$, because of  Procedure $NS$, we know that  $u(x_k) \le u(x_{k+1}) - \delta_k$ for $k \in \{0,1,2, \ldots, t-1\}$.    Summing these for $k = 0,1,2, \ldots, t-1$ and subtracting  $\sum_{k=1}^{t-1} u(v_k)$ from both sides,  we obtain
$$u(x_0) \le u(x_t) - \sum_{k=0}^{t-1} \delta_k = u(x_t) - val(\cal T).$$

Recall that $x_0 = v_0$ and the value of $u(v_0)$  just before the step \emph{Preparing for Next Pass} at the end of pass $r-1$ is $u$.  Thus $u \le u(x_t) - val(\cal T)$.  By assumption, the value of $u(x_t)$ is unchanged in pass $r$ and $\ell(x_t) > u(x_t)$, so $u < \ell(x_t) - val(\cal T)$.    However, (\ref{ell-ineq}) implies that $\ell(x_t) \le c(v_0) + val(\cal T)$, thus $u < c(v_0)$.  This contradicts the choice of $c(v_0)$  as satisfying $\ell < c(v_0) < u$.  Therefore, we conclude that $u(v_m)$ \emph{must} have decreased during pass $r$.

Now the rest of the argument for upper bounds is similar.  If the sequence $v_m, g(v_m), g^2(v_m), \ldots, $ has a repeated element, as above we get a forcing cycle  with value greater than 0.  Otherwise, the sequence is a forcing trail $\cal R$ from $v_m$ to $v_0$ with $val({\cal R}) \ge c(v_0) -u(v_m)$.  Concatenating $\cal T$ and $\cal R$ yields a forcing cycle $\cal C$ with $val({\cal C}) = val({\cal T}) + val({\cal R}) \ge \ell(v_m) -u(v_m) > 0$.

    Theorem~\ref{wd-thm} now shows that $P$ is not an $S$-order for any $S$.  \qed 
\end{proof}

The next proposition ensures that the indexed set $X_r$ specified at the end of the \emph{Labeling Loop} in pass $r$ provides a valid input to Procedure \emph{Assign Types.}

\begin{prop}
If Algorithm Recognize $S$-Orders does not terminate during the labeling loop of pass $r$ then
at the conclusion of pass $r$,   the set of points $X_r$ lie on a forcing cycle with value 0.
 \label{Xr-forcing-cycle}
\end{prop}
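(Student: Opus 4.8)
The plan is to read off, from the state of the algorithm at the moment the \emph{Labeling Loop} of pass $r$ ends, a single forcing cycle through the pass-$r$ element $v_0$ whose value is $0$ and whose vertex set is exactly $X_r$. First I would record what non-termination of the loop buys us. Since the loop exits through the clause ``no such pair exists,'' we have $M_{ij}=1$ for all $i\ne j$, i.e.\ Procedure \emph{NS} changes nothing; reading off its four steps, this says that for every ordered pair $(a,b)$ of elements of $V_r$, $a\prec b$ implies $\ell(b)\ge\ell(a)+1$ and $u(b)\ge u(a)+1$, while $a\parallel b$ implies $|\ell(a)-\ell(b)|\le 1$ and $|u(a)-u(b)|\le 1$. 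Summing these along any forcing trail ${\cal T}\colon a=w_0,\dots,w_p=b$ in $V_r$ and using Definition~\ref{delta-def} yields the two master inequalities $\ell(b)\ge\ell(a)+val({\cal T})$ and $u(b)\ge u(a)+val({\cal T})$. I would also invoke Proposition~\ref{detect-fc}: since the algorithm reached pass $r$ it did not halt in pass $0$, so $P$ has no forcing cycle of positive value, and every forcing cycle therefore has value $\le 0$.

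The structural heart is a sandwich. Suppose ${\cal C}$ is a value-$0$ forcing cycle lying in $V_r$ that passes through $v_0$, and let $w$ be any element of ${\cal C}$. Cutting ${\cal C}$ at $v_0$ and at $w$ splits it into a forcing trail from $v_0$ to $w$ of some value $p$ and a forcing trail from $w$ to $v_0$ of value $-p$. The lower-bound master inequality applied to the first gives $\ell(w)\ge\ell(v_0)+p=c(v_0)+p$, and the upper-bound master inequality applied to the second gives $c(v_0)=u(v_0)\ge u(w)-p$, i.e.\ $u(w)\le c(v_0)+p$. Since always $\ell(w)\le u(w)$, these squeeze to $\ell(w)=u(w)=c(v_0)+p$, so $w$ is pinned and hence lies in $X_r$. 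Thus the vertex set of every such cycle is contained in $X_r$.

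It then remains to exhibit, for each $v_m\in X_r\setminus\{v_0\}$, a value-$0$ forcing cycle through $v_0$ and $v_m$ lying in $V_r$; concatenating all of these ``petals'' at $v_0$ produces one forcing cycle of value $0$ whose vertex set contains every element of $X_r$, and by the sandwich its vertex set is exactly $X_r$ (note $v_0\in X_r$ since $\ell(v_0)=u(v_0)=c(v_0)$). To build a petal I would trace the tracking functions as in the proof of Proposition~\ref{trace-thm}: iterating $f$ from $v_m$ gives, after reversal, a forcing trail ${\cal T}$ from $v_0$ to $v_m$ for which the $f$-settings force $\ell(v_m)=c(v_0)+val({\cal T})$, so with the master inequality $val({\cal T})=c(v_m)-c(v_0)$; iterating $g$ gives a forcing trail ${\cal R}$ from $v_m$ to $v_0$ with $val({\cal R})=c(v_0)-c(v_m)$; hence their concatenation has value $0$. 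These sequences cannot repeat an element, since a repeat would (again as in Proposition~\ref{trace-thm}) produce a forcing cycle of positive value, contradicting the previous paragraph; so by Remark~\ref{nil-remark} they terminate at $v_0$ and stay in $V_r$.

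The one genuinely delicate point, and the step I expect to be the main obstacle, is that $f$ and $g$ are reset to $nil$ at the start of each pass, so the traces follow only bound changes made during pass $r$; the petal construction therefore needs that for a pinned $v_m$ \emph{both} $\ell(v_m)$ and $u(v_m)$ actually moved in pass $r$, so that both $f(v_m)$ and $g(v_m)$ are defined. This is exactly where the offset $c(v_0)=\ell(v_0)+\tfrac{1}{2^{r+1}}$ chosen in \emph{Preparing for Next Pass} does its work: every bound inherited from pass $r-1$ has a dyadic fractional part assembled from the coarser offsets $\tfrac12,\dots,\tfrac1{2^{r}}$, whereas the pinned value $c(v_m)=c(v_0)+(\text{integer})$ carries the finer term $\tfrac1{2^{r+1}}$ and so cannot equal any inherited value. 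Consequently both bounds of $v_m$ must have been updated in pass $r$, both traces are well defined, and every element they visit lies in $V_r$. I would formalize this dyadic-fractional-part property as a short invariant maintained across passes (in the spirit of Lemma~\ref{finite-lem}) and cite it precisely at this step.
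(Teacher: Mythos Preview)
Your proposal is correct and follows essentially the same route as the paper: trace the tracking functions $f$ and $g$ from each $v_m\in X_r$ back to $v_0$ to obtain a value-$0$ petal, then concatenate the petals at $v_0$. Your treatment is in fact more careful than the paper's---you make explicit the ``master inequalities'' that hold once the \emph{Labeling Loop} exits, you spell out the dyadic-offset argument that guarantees both $f(v_m)$ and $g(v_m)$ are defined (the paper hides this behind ``following the proof of Proposition~\ref{trace-thm}''), and your sandwich paragraph yields the slightly sharper conclusion that the cycle's vertex set is \emph{exactly} $X_r$.
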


\begin{proof}
Pick any $v_j$ in $X_r$.   By definition of $X_r$, we know $\ell(v_j) = u(v_j)$ at the end of pass $r$.   We will show $v_j$ lies on a forcing cycle with $v_0$ that has value 0.  

As in the proof of Proposition~\ref{trace-thm},  consider the sequences $v_j, f(v_j), f^2(v_j), \ldots$ and $v_j, g(v_j), g^2(v_j), \ldots$.  If either has a repeated element, the proof of Proposition~\ref{trace-thm} shows there exists  a forcing cycle $\cal C$ with $val({\cal C}) > 0$.   By Proposition~\ref{detect-fc},  Algorithm \emph{Recognize $S$-Orders} would terminate during  the labeling loop in pass 0.

Thus neither sequence has a repeated element.  Following the proof of Proposition~\ref{trace-thm}, let $\cal T$ be a forcing trail from $v_0$ to $v_j$ with $val({\cal T}) = \ell(v_j) - c(v_0)$ and $\cal R$ be a forcing trail from $v_j$ to $v_0$ with $val({\cal R}) = c(v_0) - u(v_j)$.  Concatenate $\cal T$ and $\cal R$
  to obtain a forcing cycle $\cal C$  that includes $v_0$ and $v_j$.  Now $val({\cal C}) = val({\cal T}) + val({\cal R}) =   \ell(v_j) - u(v_j) = 0.$   Thus each $v_j \in X_r$ lies on a forcing cycle with $v_0$ that has value 0 and the   resulting forcing cycles can be concatenated to obtain one forcing cycle of value 0 that contains all points in $X_r$.
   \qed
\end{proof}

Interval types are only important in an $S$-representation when the centers of points differ by exactly 1.  We next define what it means for a pair $x,y$ with fixed centers to be \emph{type independent}  and later show that if the centers for two elements are assigned in different passes of the algorithm, then that pair of  elements is type independent.   As a result, when assigning interval types, we need only consider elements whose centers are assigned in the same pass.

\begin{defn} {\rm Let $(X,\prec)$ be a poset,    $x,y$ be elements of $X$, and $c(x), c(y)$ be  real numbers   assigned to $x, y$ respectively.     We say that the pair $x,y$  is \emph{type independent}   if the following hold:\\
(i) If $x \prec y$ then $c(x) + 1 < c(y)$\\
(ii) If $y \prec x$ then $c(y) + 1 < c(x)$ \\
(iii)  If $x \parallel y$ then $|c(x) -c(y)| < 1$.    }
\end{defn}

\begin{lemma}  Suppose Algorithm Recognize $S$-Orders is run on an $S$-order $(X,\prec)$.  Let   $y \in X_r$ and $x \in X_i$ for some $i < r$, and let $c(x)$ be the center assigned to $x$ at the end of pass $i$.  If $L = \ell(y)$ and $U = u(y)$ at the start of pass $r$ and $c(y)$, the value assigned at the end of pass $r$, satisfies     $L < c(y) < U$   then the pair  $x,y$  is type independent.
\label{compatible-lem}
\end{lemma}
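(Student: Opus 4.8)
The plan is to exploit the invariants maintained by Procedure \emph{NS}, the monotonicity of the bounds across passes, and the strict hypothesis $L < c(y) < U$, closing each of the three cases of type independence with a one-line inequality. The central observation is that $x$ is fixed at the end of pass $i$, so $\ell(x) = u(x) = c(x)$ holds then, while $y$ survives unfixed through pass $r > i$; hence $x \in X_i \subseteq V_i$ and $y \in V_r \subseteq V_i$, so both lie in $V_i$ and the ordered pair $(x,y)$ (with $x$ as source) is treated by Procedure \emph{NS} during pass $i$. Note also that $y \neq v_0$ in pass $i$, since $v_0 \in X_i$ while $y \notin X_i$.

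First I would record the consequences of Procedure \emph{NS} at the end of pass $i$, taken with respect to $x$'s \emph{final} (fixed) bounds. There are two cases. If $x$ is not the starting element $v_0$ of pass $i$, then the \emph{Labeling Loop} exits only when $M_{xy}=1$; by the matrix--reset rule the nondiagonal entries of row $x$ and column $x$ are zeroed whenever $\ell(x)$ or $u(x)$ changes, so $M_{xy}=1$ at exit forces the no-change condition for the pair $(x,y)$ to hold with $x$'s final bounds (cf.\ Remark~\ref{nil-remark}). If instead $x = v_0$ of pass $i$, the single start-of-pass comparison $(v_0,y)$ already uses $x$'s bounds, which are frozen from the start of the pass and never change. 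Either way, at the end of pass $i$ the relevant steps of \emph{NS} give: if $x \prec y$ then $\ell(y) \ge c(x)+1$; if $y \prec x$ then $u(y) \le c(x)-1$; and if $x \parallel y$ then both $\ell(y) \ge c(x)-1$ and $u(y) \le c(x)+1$.

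Next I would transport these inequalities to the start of pass $r$. Since lower bounds are nondecreasing and upper bounds are nonincreasing throughout the algorithm, the value $L = \ell(y)$ at the start of pass $r$ is at least its end-of-pass-$i$ value, and $U = u(y)$ is at most its end-of-pass-$i$ value. Thus $x \prec y$ gives $L \ge c(x)+1$; $y \prec x$ gives $U \le c(x)-1$; and $x \parallel y$ gives $L \ge c(x)-1$ together with $U \le c(x)+1$. Combining each with the hypothesis $L < c(y) < U$ yields the three conditions directly: in case (i), $c(y) > L \ge c(x)+1$, so $c(x)+1 < c(y)$; in case (ii), $c(y) < U \le c(x)-1$, so $c(y)+1 < c(x)$; and in case (iii), $c(x)-1 \le L < c(y) < U \le c(x)+1$, so $|c(x)-c(y)| < 1$. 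These are exactly parts (i)--(iii) of the definition of type independence, with the strictness supplied precisely by $L < c(y)$ and $c(y) < U$.

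The main obstacle is the first step: certifying that the \emph{NS} relation between $x$ and $y$ holds with $x$'s \emph{fixed} bounds at the end of pass $i$, which is where the bookkeeping matrix $M$ does the real work and where the split between $x = v_0$ and $x \neq v_0$ must be made explicit. Everything afterward is monotonicity of the bounds plus three short inequalities. I would also remark that the powers-of-two offsets $1/2^{r+1}$ play no role in this lemma itself; their purpose is to guarantee that the hypothesis $L < c(y) < U$ is actually attainable when the algorithm assigns $c(y)$, a point exploited elsewhere rather than here.
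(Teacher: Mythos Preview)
Your proposal is correct and follows essentially the same approach as the paper: establish the \emph{NS} inequalities between $x$ and $y$ at the end of pass $i$ (with $x$'s bounds already at their fixed value $c(x)$), propagate them by monotonicity of $\ell$ and $u$ to the start of pass $r$, and then use the strict hypothesis $L<c(y)<U$ to convert the weak inequalities into the strict ones defining type independence. The paper's proof is terser, simply asserting that ``at the end of pass $i$ no additional narrowing steps are employed,'' whereas you supply the justification via the matrix--reset rule and the $x=v_0$ versus $x\neq v_0$ split; this is a welcome elaboration but not a different argument.
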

 
\begin{proof}  
By the definition of $X_r$ and $X_i$, at the start of pass $r$  we have $c(x) = \ell(x) = u(x)$ and $\ell(y) < u(y)$.  Furthermore, at the end of pass $i$ no additional narrowing steps are employed thus (i) if $x \prec y$ then $\ell(y) \ge \ell(x) + 1$,  (ii) if $y \prec x$ then $u(y) \le u(x) - 1$,   and (iii) if $x \parallel y$ then $u(y) - u(x)  \le 1$ and $\ell(y) - \ell(x) \ge -1$.    These inequalities remain true at the end of pass $r-1$ since $\ell(x), u(x)$ do not change and $\ell(y)$ can only increase and $u(y)$ can only decrease.  Hence, if $x \prec y$ then $c(y) > \ell(y) \ge \ell(x) + 1 = c(x) + 1$.  If $y \prec x$ then $c(y) < u(y) \le u(x) -1 = c(x) -1$.  Finally, if $x \parallel y$ then $c(y) - c(x) < u(y) - u(x) \le 1$ and $c(y) - c(x) > \ell(y) -\ell(x) \ge -1$, thus $|c(x) -c(y)| < 1$.  We conclude that the pair  $x,y$  is type independent.
 \qed
\end{proof}

\begin{theorem}
Algorithm  Recognize $S$-Orders  correctly determines whether poset $P$ is an $S$-order.  In the affirmative it produces an $S$-representation of $P$.  In the negative, it produces a certificate:  either a forcing cycle with value greater than 0 or a forcing cycle with value 0 for which Procedure Assign Types fails.
\label{alg-correct}
\end{theorem}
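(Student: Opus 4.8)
The plan is to verify correctness by separately establishing the two directions of a case analysis: when the algorithm reports ``not an $S$-order'' and when it produces a representation. The heavy lifting for the negative cases has already been done in the supporting propositions, so I would organize the argument around the three possible termination points of the algorithm and invoke the earlier results to dispatch each. First I would note that every execution of Algorithm \emph{Recognize $S$-Orders} terminates in exactly one of three ways: (a) during the \emph{Labeling Loop} when $\ell(v_j) > u(v_j)$; (b) during Procedure \emph{Assign Types} when some center list is exhausted; or (c) successfully, with every element assigned a center and a type. I would argue that these are exhaustive and mutually exclusive by appealing to Lemma~\ref{finite-lem} (bounds stay finite after pass~0 so no pass loops forever) together with the observation that each pass strictly shrinks $V_r$ via $V_{r+1} = V_r - X_r$, guaranteeing the algorithm halts.

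For the negative direction, termination type (a) is handled directly by Proposition~\ref{trace-thm}, which exhibits a forcing cycle $\cal C$ with $val({\cal C}) > 0$, whence Theorem~\ref{wd-thm}(i) certifies $P$ is not an $S$-order. For termination type (b), I would invoke Proposition~\ref{Xr-forcing-cycle} to see that the set $X_r$ lies on a value~$0$ forcing cycle $\cal C$; since the centers fixed for $X_r$ satisfy $c(x_i) = c(x_0) + val_{\cal C}(x_i)$ by the design of the algorithm, Theorem~\ref{wd-thm}(iii) forces these centers in \emph{any} length-$2$ representation. Thus the failure of Procedure \emph{Assign Types} to find a compatible chain of type assignments is genuine: the induced subposet on $\cal C$ admits no $S$-representation with those mandatory centers, so by Theorem~\ref{sewing} (contrapositive) $P$ cannot be an $S$-order, and $\cal C$ is the promised certificate.

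For the affirmative direction I would argue that if the algorithm runs to completion, the produced assignment of centers and types really is an $S$-representation of $P$. The key is to show that for every pair $x,y$ the defining relation of Definition~\ref{ABCD-def} holds. I would split on whether $x,y$ have centers fixed in the same pass or in different passes. For different passes, Lemma~\ref{compatible-lem} shows the pair is type independent, so by Observation~\ref{obs-centers} the centers alone already encode the correct comparability regardless of type. For the same pass, the points lie on a common value~$0$ forcing cycle (Proposition~\ref{Xr-forcing-cycle}), and Procedure \emph{Assign Types} succeeded precisely by threading compatible type assignments at consecutive centers; compatibility (Definition~\ref{defn-assign}) guarantees the induced representation is correct at each center gap of~$1$, while Observation~\ref{obs-centers} handles center gaps $\ne 1$. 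Finally I would note that the remaining correctness claim---that centers fixed in different passes never collide or violate bounds---follows from the invariant maintained throughout the algorithm that each center lies in $[\ell(\cdot),u(\cdot)]$ and from the $\tfrac{1}{2^{r+1}}$ offsets, which keep distinct passes' centers non-integer-distance apart so type independence is preserved.

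The main obstacle I anticipate is the affirmative direction's same-pass argument, specifically verifying that the chain of pairwise-compatible type assignments returned by Procedure \emph{Assign Types} yields a \emph{globally} consistent $S$-representation rather than merely a locally consistent one. Compatibility is defined only between adjacent centers $j$ and $j+1$, so I must confirm that no long-range constraint is violated---but this is exactly where Observation~\ref{obs-centers} is decisive, since interval type is \emph{only} relevant when centers differ by exactly~$1$, and any two elements with centers differing by more than~$1$ (or less than~$1$) have their comparability determined by centers alone. Thus local compatibility at every consecutive pair suffices for global correctness, and the proof reduces to carefully citing Observation~\ref{obs-centers} and the compatibility definition together, rather than to any new combinatorial computation.
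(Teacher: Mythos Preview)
Your overall approach mirrors the paper's proof closely: the same three-way case split on termination, the same appeals to Proposition~\ref{trace-thm} and Proposition~\ref{Xr-forcing-cycle}, the same same-pass/different-pass dichotomy in the affirmative case, and the same use of the $\frac{1}{2^{r}}$ offsets to verify the strict inequality $L < c(y) < U$ needed for Lemma~\ref{compatible-lem}.

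There is, however, one genuine logical slip. In termination case~(b) you invoke the contrapositive of Theorem~\ref{sewing} to conclude that $P$ is not an $S$-order. But in this paper Theorem~\ref{sewing} is \emph{proved} by running the algorithm and appealing to Theorem~\ref{alg-correct}; its proof is deferred to the end of Section~\ref{alg-sec} precisely for that reason. So citing Theorem~\ref{sewing} inside the proof of Theorem~\ref{alg-correct} is circular. The good news is that you do not need it: once you have established (via Theorem~\ref{wd-thm}(iii)) that the centers on $\cal C$ are forced in any $S$-representation, and that Procedure \emph{Assign Types} exhausts all type assignments at those centers, the induced subposet on $\cal C$ is not an $S$-order; hence $P$ is not an $S$-order simply because any $S$-representation of $P$ restricts to one of the subposet. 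Replace the appeal to Theorem~\ref{sewing} with this one-line restriction argument and your proof goes through exactly as the paper's does.

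A minor organizational point: in the affirmative different-pass case, be explicit that the $\frac{1}{2^{r}}$ arithmetic is what verifies the hypothesis $L < c(y) < U$ of Lemma~\ref{compatible-lem}, rather than presenting the offset remark as a separate observation at the end; the paper makes this dependency explicit and it is the only nontrivial step in that branch.
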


\begin{proof}
If the algorithm terminates during the \emph{Labeling Loop}, the proof of Proposition~\ref{trace-thm}
 shows how to recover a forcing cycle with value greater than 0.  Thus by Theorem~\ref{wd-thm}, $P$ is not an $S$-order for any $S$.  If the algorithm terminates during Procedure \emph{Assign Types}, then  that procedure returns a value 0 forcing cycle for which there is no $S$-representation.
  
 Otherwise, the algorithm terminates with a representation in which the center of $x$ is $c(x)$ for all $x$ in $P$ and we will show this is an $S$-representation of $P$.   Recall that the ground set of $P$ is partitioned into $\{X_r\}$ where $X_r$ consists of the points $x$  for which $c(x)$ is defined during pass $r$.  First we consider two points in the same part of this partition.
 By Proposition~\ref{Xr-forcing-cycle}, the points of $X_r$ are part of a forcing cycle ${\cal C}_r$ with $val({\cal C}_r) = 0$,  and by Theorem~\ref {wd-thm}, fixing $c(x_0)$ for some $x_0 \in X_r$ determines $c(x_i)$ for all $x_i \in X_r$.    The procedure \emph{Assign Types} determines whether $P$ restricted to $X_r$ is an $S$-order and this is independent of the value chosen for $c(x_0)$.    Thus the poset restricted to $X_r$ is an $S$-order for each $r$ and the algorithm produces $S$-representations for each part.  
 
 Finally, we show that  any pair of points in different parts of the partition is type independent, thus regardless of the type of intervals assigned in procedure \emph{Assign Types}, the representation is an $S$-representation of $P$.
 Consider two points in different parts of the partition,  $x$ in $X_i$ for some $i < r$ and   $y \in X_r$.    At the beginning of pass $r$, the set $V_r$ consists of the elements $w$  of $P$ for which $c(w)$ has not yet been defined.  These are precisely the elements for which $\ell(w) < u(w)$.    By construction, $\ell(w)$ and $u(w)$ are integer multiples of $\frac{1}{2^{r-1}}$ and thus  $u(w) - \ell(w) \ge \frac{1}{2^{r-1}}$.   In pass $r$, an element $v_0 \in V_r$ is chosen and its center is set to $c(v_0) = \ell(v_0) + \frac{1}{2^r}$.  Thus $\ell(v_0) < c(v_0) < u(v_0)$.  As the narrowing steps are implemented in pass $r$,  any lower or upper bound \emph{that is changed}, is changed to an integer of the form $\frac{k}{2^{r-1}} + \frac{1}{2^r}$ for some $k$.  Thus at the end of pass $r$ we will have $\ell(y) = u(y) = \frac{k}{2^{r-1}} + \frac{1}{2^r}$ for some integer $k$.  Thus  when $c(y)$ is assigned a value, it is strictly between   the lower and upper bounds at the beginning of the pass.  By Lemma~\ref{compatible-lem}, the pair $x,y$ is type independent.  \qed 
\end{proof}

\begin{theorem}
Algorithm Recognize $S$-Orders runs in  $O(n^5)$ time on a poset with $n$ elements.
\end{theorem}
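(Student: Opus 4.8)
The plan is to bound the total running time by summing the cost over all passes. First I would observe that each pass fixes the center of at least one element, namely the element designated $v_0$ at the start of the pass, which lies in $X_r$ since $\ell(v_0)=u(v_0)=c(v_0)$. Hence $|X_r|\ge 1$, so $|V_{r+1}|\le |V_r|-1$ and there are at most $n$ passes. I would then argue that within a single pass the work is dominated by the \emph{Labeling Loop}, because Procedure \emph{Assign Types} is cheap: a value-$0$ forcing cycle has only $O(n)$ distinct centers, each center carries at most $|S|\le 4$ elements and therefore at most $|S|!=O(1)$ candidate nodes, and each pairwise compatibility test inspects only the $O(1)$ elements at two adjacent centers. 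Thus one call costs $O(n)$, and the at most $n$ calls over all passes contribute only $O(n^2)$.

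The heart of the argument is to bound the number of times a bound can change during one pass. Each lower bound only increases and each upper bound only decreases under Procedure \emph{NS}, so both are monotone. By Lemma~\ref{finite-lem} all bounds lie in $[-|V|,|V|]$ after pass $0$, and this range is preserved in later passes up to the additive perturbation $\tfrac{1}{2^{r}}$ introduced when $c(v_0)$ is chosen. The key structural fact I would establish is that, although the grid spacing shrinks to $\tfrac{1}{2^{r-1}}$, every bound that actually changes during pass $r$ takes a value on the single coset $c(v_0)+\mathbb{Z}$: the increments in steps \emph{NS}~1--4 are always $\pm 1$, and an element whose bound has not yet changed in pass $r$ is stable under \emph{NS} (its comparison inequalities held at the end of the previous pass, and monotonicity keeps them holding), so changes can only propagate outward from $v_0$ and can only land on that coset. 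Since this coset contains only $O(n)$ points in the admissible range, each of the $2n_r=O(n)$ bounds changes $O(n)$ times, giving $O(n^2)$ bound changes per pass.

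Finally I would convert bound changes into loop iterations. Each change resets the $O(n)$ nondiagonal entries of one row and one column of $M$ to $0$, and each \emph{NS} application either clears one such $0$ in $O(1)$ time or is itself one of the $O(n^2)$ changes; a convenient loose count charges at most one full $O(n^2)$ scan of $M$ between consecutive changes, so each pass runs in $O(n^2)\cdot O(n^2)=O(n^4)$ time and the $O(n)$ passes yield the claimed $O(n^5)$ bound. The main obstacle I anticipate is the middle step: rigorously confining the changed bounds to a single coset of size $O(n)$ so that the relaxation terminates after polynomially many updates. This is delicate precisely because the grid becomes exponentially fine in later passes, and it is exactly the monotonicity of the bounds together with the integrality of the \emph{NS} increments that prevents the bounds from drifting onto other cosets and churning.
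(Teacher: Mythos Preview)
Your argument is essentially correct and reaches the right bound, but it is organized differently from the paper's proof and omits one factor the paper includes. You bound the work \emph{per pass}: $O(n^2)$ bound changes (via the coset argument, which is valid) times $O(n^2)$ work between consecutive changes, then multiply by $O(n)$ passes. The paper instead exploits global monotonicity to show each bound changes only $O(n)$ times \emph{across the entire algorithm}: a lower bound rises monotonically through a range of width $O(n)$, and apart from at most one sub-integer ``coset shift'' per pass, every change is by at least $1$. This yields only $O(n^2)$ bound changes total, hence $O(n^4)$ comparisons. The paper then spends an extra $O(n)$ factor on bit complexity, since the perturbations $\tfrac{1}{2^r}$ force the bounds to carry $\Theta(r)=O(n)$ bits; this is the source of the final $O(n^5)$. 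You do not account for that bit cost; had you added it to your per-pass accounting you would obtain $O(n^6)$, so it is precisely the paper's tighter global change count that absorbs the bit factor. Your coset observation is exactly right and is also implicit in the paper's claim that ``if $\ell(v_i)$ is increased more than once, it increases by at least one after the first change''; the difference is that the paper aggregates these unit steps over all passes rather than pass by pass. A minor point: the total cost of all calls to \emph{Assign Types} is $O(n)$, not $O(n^2)$, because the sets $X_r$ partition the ground set and the $r$-th call costs $O(|X_r|)$.
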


\begin{proof}
First we consider Procedure \emph{Assign Types}.  The minimum and maximum values of centers ($m$ and $M$) are determined using a linear number of comparisons.  After this, the procedure creates an array of lists, one list for each distinct center.  Each list contains at most 24 nodes and each node contains  a type assignment, a list of at most 24 backwards pointers   and a list of at most 24  forward pointers.  Each of the pairs of nodes at centers $j$ and $j+1$ is checked for type compatibility in $O(1)$ time.  Thus, all comparisons between nodes at adjacent centers require $O(1)$.  All elements of the poset participate in exactly one invocation of the procedure.  Thus, the total running time of all invocations of  Procedure \emph{Assign Types} is bounded by $O(n)$.

As shown in Lemma~\ref{finite-lem}, 
  the initial finite values assigned to $\ell(v_i)$ and $u(v_i)$  in pass 0 must be between $-n$ and $n$.  After this, each time a lower bound changes during pass 0, it increases by at least one and each time an upper bound changes, it decreases by at least one.

In pass $r$ for $r \ge 1$, a point $v_0$ is selected with $\ell(v_0) < u(v_0)$ and assigned center $c(v_0) = \ell(v_0) + \frac{1}{2^r}$ and its new lower and upper bounds are each given value $c(v_0)$.    Any changes that occur during pass $r$ will result in lower and upper bounds being assigned a value that differs from $c(v_0)$ by an integer.  Thus if $\ell(v_i)$ is increased more than once, it increases by at least one after the first change, and similarly, upper bounds are decreased by at least one after the first change.    There are $n$ elements overall, and each element has a bound changed a total of $O(n)$ times, thus there are $O(n^2)$ changes in bounds before the algorithm terminates.  There are also $O(n^2)$ comparisons between changes in bounds, hence the algorithm requires $O(n^4)$ comparisons and arithmetic operations.   Each bound is represented using $O(n)$ bits, thus the overall running time is $O(n^5)$.
\qed
\end{proof}

We end this section by providing  a proof of Theorem~\ref{sewing}, which was stated in Section~\ref{forcing-sec}.

\begin{proof}(of Theorem~\ref{sewing})
Run Algorithm \emph{Recognize $S$-Orders}  on poset $P$.  Since all forcing cycles in $P$ have value at most 0, the algorithm cannot terminate during the \emph{Labeling Loop} by Proposition~\ref{trace-thm}.  By hypothesis, the algorithm cannot terminate during Procedure \emph{Assign Types}.  Thus, by Theorem~\ref{alg-correct}, the algorithm produces an $S$-representation of $P$.  \qed
 \end{proof}

\section{Appendix}

\label{appendix}

In Example~\ref{F-example} we showed that the posets $\twotwo$, $\threeone$, $\fourone$, $V$, and $Z$ are positioned correctly in the Venn diagrams of Figures~\ref{Venn-2-fig} and \ref{Venn-3-fig}.  In this appendix, we provide proofs for the remaining posets that appear in those figures.  
  
\noindent  {\bf The poset $\threeoneone$. }  This poset has the following forcing cycle ${\cal C}:  x \prec y \prec z \parallel u \parallel x \prec y \prec z \parallel v \parallel x$ with $val({\cal C}) = 0$.    Suppose $\threeoneone$ were an $S$-order and without loss of generality fix an $S$-representation  $\cal I$ of it in which all intervals have length 2 and $c(x) = 0$.  Now Theorem~\ref{wd-thm} implies that $c(y) = 1, c(z) = 2, c(u) = 1,c(v) =1$.  There are three unit intervals with center at 1,  thus if $|S| \le 2$,  two of these intervals must be identical.  Identical intervals result in twins, so the poset $\threeoneone$ can not be induced in any twin-free $S$-order when $|S| \le 2$.
 
 We also consider cases where $|S| = 3$.  A representation is possible if $S= \{A,C,D\}$ (namely by making $I_x,I_y,I_z$ of type $C$, $I_u$ of type $A$ and $I_v$ of type $D$) and  if $S= \{A,B,C\}$ (namely by making $I_x,I_u,I_z$ of type $A$, $I_y$ of type $B$ and $I_v$ of type $C$).  However,   an $S$-representation is not possible  for $S= \{B,C,D\}$, as we now show.  Since $u \parallel z$ and $v \parallel z$, by Observation~\ref{obs-centers}, we know neither $I_u$ nor $I_v$ can be of type $B$, so without loss of generality, $I_u$ is Type $C$ and $I_v$ is type $D$.  Now if $I_z$ is type $B$ or $C$ we get $v \prec z$, a contradiction, and if $I_z$ is type $D$ we get $u \prec z$, a contradiction.

\medskip
 
\noindent {\bf   The poset $H$. }  This poset has the  forcing cycle ${\cal C}:  x \prec y \prec z  \prec w \parallel u \parallel v  \parallel x$ with $val({\cal C}) = 0$.  Suppose $H$ were an $S$-order for some $S$.   Without loss of generality, fix an $S$-representation  $\cal I$ of it which all intervals have length 2 and     $c(x) = 0$.   Now Theorem~\ref{wd-thm} implies that $c(y) = 1, c(z) = 2, c(w) = 3, c(u) = 2, $ and 
$c(v) =1$.   By choosing $I_y$ and $I_z$ to be type $B$ and the remaining intervals as type $A$, we get an $AB$-representation of $H$.  

By Observation~\ref{obs-centers}, intervals $I_x, I_w, I_u, I_v$ cannot be of type $B$.  If no intervals are type $B$, then without loss of generality, $I_x, I_y, I_z, I_w$  are all of type $C$ and because $y \prec u$ and $v \prec z$, this forces $I_u, I_v$ also to be of type $C$, a contradiction.  If no interval is of type $A$, without loss of generality, $I_x$ is of type $C$, forcing $ I_y, I_z, I_w$ to also be of type $C$, and then it is not possible to assign a type to $I_u$ because $y \prec u$ and $u \parallel w$.  Thus $H$ has an $S$-representation if and only if $A,B \subseteq S$.
 
\medskip

\noindent {\bf   The poset $D$.    }
  This poset has the following induced forcing cycle ${\cal C}:  x \prec y \prec z \parallel v \parallel x \prec w \prec z \parallel v \parallel x$ with $val({\cal C}) = 0$.  Suppose $D$ were an $S$-order for some $S$.   Without loss of generality, fix an $S$-representation  $\cal I$ of it which all intervals have length 2 and     $c(x) = 0$. Now Theorem~\ref{wd-thm} implies that $c(y) = 1, c(z) = 2, c(w) = 1, c(v) =1$. 
  
  A representation is possible if $S= \{A,B,C\}$  or $S= \{B,C, D\}$ (namely by making $I_v,$ of type $A$ or type $D$,   $I_y$ of type $B$ and $I_x,I_z,I_w$ of type $C$).   Next we show a representation is not possible for $S= \{A,C,D\}$.  If it were, then without loss of generality, intervals $I_x$, $I_y$, and $I_z$ are all of type $C$ and $I_w$ is of type $A$ or $D$.  It is then impossible to have $x \prec w$ in $P$.

  Finally, consider $S$ with $|S| \le 2$.   There are three unit intervals with center at 1,  thus if $|S| \le 2$, two of these intervals must be identical.  Identical intervals result in twins, so the poset $D$ can not be induced in any twin-free $S$-order when $|S| \le 2$.
 
 \medskip
 
\noindent {\bf   The poset $Y$. }  We show that poset $Y$ is an $S$-order for $S = \{A,B,C\}$ but not when $S = \{A,C,D\}$  or $\{B,C, D\}$, nor for any $S$ with $|S| \le 2$.  The same is true of its dual.
 
   Poset $Y$ has the following induced forcing cycle ${\cal C}:  x \prec y \prec z \parallel v \parallel x \prec y \prec w \parallel v \parallel x$ with $val({\cal C}) = 0$.  Suppose $Y$ were an $S$-order for some $S$.   Without loss of generality, fix an $S$-representation  $\cal I$ of it which all intervals have length 2 and     $c(x) = 0$.   Now Theorem~\ref{wd-thm} implies that $c(y) = 1, c(z) = 2, c(w) = 2, c(v) =1$. 
   
   A representation is possible if $S= \{A,B,C\}$  (namely by making $I_x, I_z, I_v,$ of type $A$, $I_y$ of type $B$ and $I_w$ of type $C$). 
  If $\cal I$ contains only intervals of type $A$ and $B$, then $ I_z$ and $I_w$ must  be of type $A$ by Observation~\ref{obs-centers}, resulting in $z$ and $w$ being twins.  
  
  Next we consider $S = \{A,C,D\}$ and for a contradiction, assume an $S$-representation is possible.  By Observation~\ref{obs-centers}, intervals $I_y,I_z, I_w$ must all be of type $C$ or all of  type $D$ and each of these cases leads to $I_z$, $I_w$ getting identical intervals.  Similarly, if $S = \{B,C,D\}$,  intervals $I_v, I_w, I_z$ must be type $C$ or $D$ by Observation~\ref{obs-centers}, but $z$ and $w$ are both incomparable to $v$, so they are forced to get identical intervals, a contradiction.
  
  Finally, $Y$ is not induced in a twin-free $S$ order for $|S|\le 2$ since we have shown this to be true for $S=\{A,B\}$ and each other such $S$ is a subset of $\{B,C,D\}$ or $\{A,C,D\}$.

   The same is true of its dual.
   
    \medskip
   
\noindent  {\bf  The poset $X_1$.}  This poset has the forcing cycle ${\cal C}:  x \prec t \prec w  \parallel y \parallel u \prec t \prec z  \parallel v \parallel x$ with $val({\cal C}) = 0$     Suppose $X_1$ were an $S$-order  for some $S$.  Without loss of generality, fix an $S$-representation  $\cal I$ of it in which all intervals have length 2 and   $c(x) = 0$.  Now Theorem~\ref{wd-thm} implies that $c(y) = 1, c(z) = 2, c(u) = 0, c(v) = 1, c(w) = 2, c(t) = 1$.  
 
 An $S$-representation is possible for $S= \{B,C,D\}$ (namely by making $I_x, I_y, I_z$ of type $C$, $I_u,I_v,I_w$ of type $D$ and $I_t$ of type $B$).  We show that $X_1$ is not an $S$-order for any other $S$, $|S| \leq 3$. 
 
Note that $x, y, u, v$ induces a $\twotwo$ in $X_1$.  As seen in Example 10, without loss of generality $I_x, I_y$ are type $C$ and $I_u, I_v$ are  type $D$. Since Since $u \prec t$ and $v \prec t$, interval $I_t$ must be type $B$.

\medskip

\noindent  {\bf  The poset $X_2$.}  This poset has the forcing cycle ${\cal C}:  x \prec y \prec z  \parallel t \parallel u \prec v \prec w  \parallel t \parallel x$ with $val({\cal C}) = 0$     Suppose $X_2$ were an $S$-order  for some $S$.  Without loss of generality, fix an $S$-representation  $\cal I$ of it in which all intervals have length 2 and     $c(x) = 0$.  Now Theorem~\ref{wd-thm} implies that $c(y) = 1, c(z) = 2, c(u) = 0, c(v) = 1, c(w) = 2, c(t) = 1$.  
 
 An $S$-representation is possible for $S= \{A,C,D\}$ (namely by making  $I_x, I_y, I_z$  type $C$,   $I_u,I_v,I_w$  type $D$, and $I_t$  type $A$).  We show that $X_2$ is not an $S$-order for any other $S$, $|S| \leq 3$. 
 
As in the case of poset $X_1$, $x, y, u, v$ induces a $\twotwo$ in $X_2$. Again, without loss of generality $I_x, I_y$ are type $C$ and $I_u, I_v$ are  type $D$.  Since $u \parallel t$ and $x \parallel t$, interval $I_t$ must be type $A$.

\medskip

\noindent  {\bf  The poset $X_3$.} The poset $X_3$ contains both posets $X_1$ and $X_2$ and hence is not an $S$-order for $|S| \leq 3$.  A representation is possible for $S= \{A,B,C,D\}$ by starting with the $BCD$-representation for $X_1$ given above and introducing an addition interval, of type $A$, centered at 1.

\end{document}